\newcommand*{\Z}{\mathbb{Z}}
\newcommand*{\R}{\mathbb{R}}
\newcommand{\SL}{\mathsf{SL}}
\newcommand*{\verts}[1]{\left\lvert #1 \right\rvert}
\newcommand*{\braces}[1]{\left\lbrace #1 \right\rbrace}
\newcommand*{\parens}[1]{\left\lparen #1 \right\rparen}
\newcommand*{\ceils}[1]{\left\lceil #1 \right\rceil}
\newcommand*{\card}{\verts}
\newcommand*{\setof}{\braces}
\newcommand*{\ceil}{\ceils}
\newcommand*{\deftobe}{\mathrel{\coloneqq}}
\newcommand*{\maps}{\colon}
\providecommand*{\st}{\,:\,}
\newcommand{\join}{\oplus}
\renewcommand*{\emptyset}{\varnothing}
\renewcommand*{\epsilon}{\varepsilon}
\renewcommand*{\phi}{\varphi}
\renewcommand*{\subset}{\subseteq}
\newcommand{\bnd}{\partial}   
\DeclareMathOperator{\cone}{cone}
\DeclareMathOperator{\conv}{conv}
\DeclareMathOperator{\lin}{lin}
\DeclareMathOperator{\pos}{pos}
\newcommand{\bigsetsum}{\bigsqcup}   
\newcommand{\pt}[1]{\mathbf{#1}}   
\newcommand{\mul}[1]{\mathbf{#1}}   
\newcommand{\intr}[1]{#1^{\circ}}   
\newcommand\commentout[1]{}
\newcommand{\J}{\mathcal{J}}
\newcommand{\K}{\mathcal{K}}
\newcommand{\C}{\mathcal{C}}
\renewcommand{\L}{\mathcal{L}}
\newcommand{\M}{\mathcal{M}}
\renewcommand{\P}{\mathcal{P}}
\newcommand{\Q}{\mathcal{Q}}
\DeclareMathOperator{\Ehr}{Ehr}
\newcommand{\lenv}{\underline{\bnd}}   
\newcommand{\llenv}{\underline{\bnd}_{\Z}}   
\newcommand{\plenv}{\underline{\bnd}^{\p}}   
\newcommand{\pllenv}{\underline{\bnd}_{\Z}^{\p}}   
\DeclareMathOperator{\den}{den}   
\newcommand{\p}{\pt{p}}
\newcommand{\x}{\pt{x}}
\renewcommand{\a}{\pt{a}}
\newcommand{\dual}[1]{#1^{\vee}}   
\newcommand{\enp}{\pt{e}_{n+1}}   
\DeclareMathOperator{\lcm}{lcm}   
\newtheorem{thm}{Theorem}[section]
\newtheorem{lem}[thm]{Lemma}
\newtheorem{cor}[thm]{Corollary}
\newtheorem{prop}[thm]{Proposition}
\theoremstyle{definition}
\newtheorem{exm}[thm]{Example}
\theoremstyle{remark}
\newtheorem{rem}[thm]{Remark}
\title{Lattice-point generating functions for free sums of convex
sets}
\author[M. Beck]{Matthias Beck}
\address[Matthias Beck]{Department of Mathematics\\
         San Francisco State University\\
         San Francisco, CA 94132\\
         USA}
\email{mattbeck@sfsu.edu}
\author[P. Jayawant]{Pallavi Jayawant}
\address[Pallavi Jayawant]{Department of Mathematics\\
              Bates College\\
              Lewiston, ME 04240\\
              USA}
\email{pjayawan@bates.edu}
\author[T.~B.~McAllister]{%
   Tyrrell B. McAllister
}%
\address[Tyrrell B. McAllister]{%
   Department of Mathematics \\
   University of Wyoming \\
   Laramie, WY 82071 \\
   USA
}%
\email{%
   tmcallis@uwyo.edu
}%
\begin{document}

\begin{abstract}
   Let $\J$ and $\K$ be convex sets in $\R^{n}$ whose affine spans
   intersect at a single rational point in $\J \cap \K$, and let
   $\J \oplus \K = \conv(\J \cup \K)$.  We give formulas for the
   generating function
   \begin{equation*}
      \sigma_{\cone(\J \oplus \K)}(z_1, \dots, z_n, z_{n+1}) =
      \sum_{(m_1, \dots, m_n) \in t(\J \oplus \K) \cap \Z^{n}}
      z_1^{ m_1 } \cdots z_n^{ m_n } z_{n+1}^{t}
   \end{equation*}
   of lattice points in all integer dilates of $\J \oplus \K$ in
   terms of $\sigma_{\cone \J}$ and $\sigma_{\cone \K}$, under
   various conditions on $\J$ and $\K$.  This work is motivated by
   (and recovers) a product formula of B.\ Braun for the Ehrhart
   series of $\P \oplus \Q$ in the case where $\P$ and $\Q$ are
   lattice polytopes containing the origin, one of which is
   reflexive.  In particular, we find necessary and sufficient
   conditions for Braun's formula and its multivariate analogue.
\end{abstract}

\keywords{Integer lattice point, generating function, Ehrhart
series, free sum, affine free sum, convex set.}

\subjclass[2000]{Primary 05C15; Secondary 11P21, 52B20.}


\thanks{\emph{Acknowledgments.} We thank B.~Braun and two
anonymous referees for their helpful remarks.  We also thank the
Rocky Mountain Mathematics Consortium for funding the 2011
graduate summer school at which this work began.  M.~Beck was
partially supported by the NSF (DMS-0810105 \& DMS-1162638).}

\maketitle


\section{Introduction}
\label{sec:Introduction}

Given arbitrary convex subsets $\J, \K \subset \R^{n}$, we denote
the convex hull of their union by $\J \join \K \deftobe \conv(\J
\cup \K)$.  We call $\J \join \K$ a \emph{free sum of $\J$ and
$\K$} when $\J$ and $\K$ each contain the origin and their
respective linear spans are orthogonal coordinate subspaces
(\emph{i.e.}, subspaces spanned by subsets of the standard basis
vectors $\pt{e}_{1}, \dotsc, \pt{e}_{n}$).\footnote{The free sum
is sometimes called the \emph{direct sum}.  Diverse conditions on
the summands appear in the literature.  Some authors require that
the origin ~\cite{BH2006}, or at least a unique point of
intersection ~\cite{McM1976, PS1967}, be in the interior of each
summand.  Others require no intersection, insisting only that the
linear spans of the summands be orthogonal coordinate subspaces
~\cite{braunreflexive, HRGZ1997}.  We require each summand to
contain the origin, but we allow the origin to be on the
boundary.} More generally, we will write ``$\J \oplus \K$ is a
free sum'' when $\J \join \K$ is a free sum of $\J$ and $\K$ up to
the action of $\SL_{n}(\Z)$ on $\R^{n}$.  A familiar example is
the octahedron $\conv \setof{\pm \pt{e}_{1}, \pm \pt{e}_{2}, \pm
\pt{e}_{3}}$ in $\R^{3}$, which is the free sum of the ``diamond''
$\conv \setof{\pm \pt{e}_{1}, \pm \pt{e}_{2}}$ and the line
segment $\conv \setof{\pm \pt{e}_{3}}$.  Free sums arise naturally in toric geometry because the
free-sum
operation is dual to the Cartesian product operation under polar
duality: $\dual{(\P \times \Q)} = \dual{\P} \oplus \dual{\Q}$.
For example, the free-sum decomposition above of the octahedron
corresponds to the decomposition of the toric variety
$\mathbb{P}^{1} \times \mathbb{P}^{1} \times \mathbb{P}^{1}$ as
the product of $\mathbb{P}^{1} \times \mathbb{P}^{1}$ and
$\mathbb{P}^{1}$.

Our goal is to understand the integer lattice points in a free sum
and its integer dilates in terms of the corresponding data for its
summands.  Of particular interest is the case of a free sum $\P
\oplus \Q$ in which $\P$ and $\Q$ are rational polytopes.  A
\emph{rational} (respectively, \emph{lattice}) \emph{polytope} in
$\R^{n}$ is a polytope all of whose vertices are in $\mathbb{Q}^n$
(respectively, the \emph{integer lattice} $\Z^{n}$).  Given a
rational polytope $\P \subset \R^{n}$, its \emph{Ehrhart series}
\[
  \Ehr_\P(t) \deftobe 1 + \sum_{k \in \Z_{\ge 1}} \card{k \P \cap
  \Z^n} t^k
\]
is the generating function of the \emph{Ehrhart quasi-polynomial}
of $\P$, which counts the integer lattice points in $k\P$ as a
function of an integer dilation parameter $k$.  Let $\den \P$
denote the \emph{denominator} of $\P$, the smallest positive
integer such that the corresponding dilate of $\P$ is a lattice
polytope.  A famous theorem of Ehrhart ~\cite{ehrhartpolynomial}
says that
\[
   \Ehr_{\P}(t) = \frac{ \delta_\P(t) }{ (1 - t^{\den \P})^{\dim
   \P + 1} }
\]
for some polynomial $\delta_\P$, the \emph{$\delta$-polynomial} of
$\P$.  (Common alternative names for the $\delta$-polynomial
include \emph{$h^*$-polynomial} and \emph{Ehrhart $h$-vector}.)
See, e.g., ~\cite{ccd,hibi,stanleyec1} for this and many more
facts about Ehrhart series.

Our work is motivated by the following result of B.\ Braun, which
expresses the $\delta$-polynomial of $\P \oplus \Q$ in terms of
the $\delta$-polynomials of $\P$ and $\Q$ when $\P$ is a reflexive
polytope (defined in Section ~\ref{sec:BraunFormula} below).
\begin{thm}[\cite{braunreflexive}]\label{Braunthm}
   Suppose that $\P, \Q \subset \R^n$ are lattice polytopes such
   that $\P$ is reflexive, $\Q$ contains the origin in its
   relative interior, and $\P \oplus \Q$ is a free sum.  Then
   \begin{equation}
      \label{eq:braunreflexive}
      \delta_{\P \oplus \Q} = \delta_{\P} \, \delta_{\Q} \, .
   \end{equation}
   That is, in terms of Ehrhart series,
   \begin{equation}
      \Ehr_{\P \oplus \Q}(t) = (1-t)\Ehr_{\P}(t) \Ehr_{\Q}(t).
      \label{eq:braunreflexiveseries}
   \end{equation}
\end{thm}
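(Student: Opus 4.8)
The plan is to reduce the count of lattice points in each dilate of $\P \join \Q$ to a single inequality in the gauge functionals of the two summands, and then to read off the Ehrhart series by interchanging the order of summation. For a convex set $C$ containing the origin write $g_{C}(\x) \deftobe \min\setof{a \ge 0 : \x \in aC}$ for its gauge (Minkowski) functional, which is finite and positively homogeneous whenever $\x$ lies in the linear span of $C$ and the origin lies in the relative interior of $C$. First I would use the $\SL_{n}(\Z)$-action to arrange that the linear spans $V_{\P}$ and $V_{\Q}$ are complementary coordinate subspaces, so that $\R^{n} = V_{\P} \times V_{\Q}$ and $\Z^{n} = (\Z^{n} \cap V_{\P}) \times (\Z^{n} \cap V_{\Q})$, with $\P \subset V_{\P}$ and $\Q \subset V_{\Q}$. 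Since $\P$ and $\Q$ both contain the origin, $\P \join \Q = \conv(\P \cup \Q) = \setof{\lambda\p + (1-\lambda)\pt{q} : \p \in \P,\ \pt{q} \in \Q,\ \lambda \in [0,1]}$, and orthogonality of the spans shows that $(\x,\pt{y}) \in m(\P \join \Q)$ holds if and only if $m$ admits a splitting $m = a + b$ with $a,b \ge 0$, $\x \in a\P$, and $\pt{y} \in b\Q$. Such a splitting exists exactly when $g_{\P}(\x) + g_{\Q}(\pt{y}) \le m$, so that $\card{m(\P \join \Q) \cap \Z^{n}}$ equals the number of pairs $(\x,\pt{y}) \in \Z^{n}$ with $g_{\P}(\x) + g_{\Q}(\pt{y}) \le m$.

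The crux of the argument, and the only place reflexivity enters, is the integrality of $g_{\P}$ on the lattice. Because $\P$ is reflexive, its facets have primitive integer normals $\a_{i}$ with $\P = \setof{\pt{w} : \inner{\a_{i}, \pt{w}} \le 1 \text{ for all } i}$, whence $g_{\P}(\x) = \max_{i} \inner{\a_{i}, \x}$ is a nonnegative \emph{integer} for every $\x \in \Z^{n} \cap V_{\P}$. This is exactly what will allow the ceiling in the next step to separate, and I expect it to be the main obstacle: for a general lattice polytope $g_{\P}$ takes non-integer values, the separation fails, and the product formula \eqref{eq:braunreflexive} breaks down.

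Finally I would compute
\[
   \Ehr_{\P \join \Q}(t)
   = \sum_{m \ge 0} t^{m} \, \card{m(\P \join \Q) \cap \Z^{n}}
   = \sum_{(\x,\pt{y})} \ \sum_{m \ge g_{\P}(\x) + g_{\Q}(\pt{y})} t^{m}.
\]
Since $g_{\P}(\x) \in \Z$, the least admissible $m$ is $g_{\P}(\x) + \lceil g_{\Q}(\pt{y}) \rceil$, so the inner sum is $t^{\,g_{\P}(\x) + \lceil g_{\Q}(\pt{y}) \rceil}/(1-t)$ and the double sum factors as
\[
   \Ehr_{\P \join \Q}(t) = \frac{1}{1-t}
   \parens{ \sum_{\x} t^{g_{\P}(\x)} }
   \parens{ \sum_{\pt{y}} t^{\lceil g_{\Q}(\pt{y}) \rceil} }.
\]
Grouping lattice points by the value of the gauge, the number of $\x$ with $g_{\P}(\x) \le k$ is $\card{k\P \cap \Z^{n}}$, while the number of $\pt{y}$ with $\lceil g_{\Q}(\pt{y}) \rceil \le k$ equals the number with $g_{\Q}(\pt{y}) \le k$, namely $\card{k\Q \cap \Z^{n}}$; the resulting telescoping gives $\sum_{\x} t^{g_{\P}(\x)} = (1-t)\Ehr_{\P}(t)$ and $\sum_{\pt{y}} t^{\lceil g_{\Q}(\pt{y}) \rceil} = (1-t)\Ehr_{\Q}(t)$. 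Substituting yields $\Ehr_{\P \join \Q}(t) = (1-t)\Ehr_{\P}(t)\Ehr_{\Q}(t)$, which is \eqref{eq:braunreflexiveseries}; clearing the denominators in Ehrhart's theorem then turns this into the $\delta$-polynomial identity \eqref{eq:braunreflexive}.
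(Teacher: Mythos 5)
Your argument is correct, and it proves the theorem by a direct, self-contained computation at the level of the Ehrhart series rather than through the machinery the paper develops. The underlying combinatorial content is the same: your characterization of $m(\P\join\Q)\cap\Z^{n}$ by the inequality $g_{\P}(\x)+g_{\Q}(\pt{y})\le m$ is exactly the paper's Minkowski-sum identity $\cone(\P\oplus\Q)=\cone\P+\cone\Q$ together with the complementary-sublattice splitting of Proposition~\ref{prop:LatticeFreeSumIntoDisjointBoundaryandCone}; the point $(\x,g_{\P}(\x))$ is the paper's lower-envelope projection $\epsilon_{\P}$, and your key observation that $g_{\P}$ is integer-valued on lattice points is precisely the condition $\llenv\cone\P=(\lenv\cone\P)_{\Z}$ of Propositions~\ref{settogeneratingfctprop} and~\ref{ratflexiveprop} (where only integrality of the facet normals is used, so your remark about where the argument would break for general lattice polytopes matches the paper's converse, Theorem~\ref{thm:ConverseBraunFormula}). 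What the paper's packaging buys is the multivariate identity $\sigma_{\cone(\P\oplus\Q)}=(1-z_{n+1})\sigma_{\cone\P}\sigma_{\cone\Q}$ for arbitrary compact convex summands, plus the apparatus (shifted lower envelopes, Proposition~\ref{prop:RindAndStrata}) needed for the necessity direction; what your route buys is brevity and transparency for the univariate reflexive case, since the interchange of summation and the telescoping $\sum_{\x}t^{g_{\P}(\x)}=(1-t)\Ehr_{\P}(t)$ replace the lower-envelope formalism entirely. One cosmetic quibble: primitivity of the normals $\a_{i}$ is not what you use, only their integrality (i.e., that $\dual\P$ is a lattice polytope).
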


Our first main result, Theorem
~\ref{thm:MultivariateBraunsFormula} below, gives a multivariate
generalization of Theorem ~\ref{Braunthm} for arbitrary compact
convex sets.  Our second main result, Theorem
~\ref{thm:ConverseBraunFormula} below, characterizes the free sums
of rational polytopes that satisfy our multivariate generalization
of equation ~\eqref{eq:braunreflexiveseries}.  A characterization
of the free sums satisfying equation
~\eqref{eq:braunreflexiveseries} itself is a consequence.  Before
stating our results, we first need to define some notation.

The Ehrhart series is a specialization of a multivariate Laurent
series defined as follows.  Let $\alpha \maps \R^{n} \to \R^{n+1}$
be the affine embedding $(a_{1}, \dotsc, a_{n}) \mapsto (a_{1},
\dotsc, a_{n}, 1)$.  Given a convex set $\K \subset \R^{n}$, let
$\cone \K \subset \R^{n+1}$ be the set of all nonnegative scalar
multiples of elements of $\alpha(\K)$.  Equivalently, $\cone \K$
is the intersection of all linear cones containing $\alpha(\K)$.
Write $S_{\Z}$ for the set of integer lattice points in a set $S$.
The \emph{lattice-point generating function} $\sigma_{S}(\mul{z})$
of $S \subset \R^{n+1}$ is the formal multivariate Laurent series
\begin{equation*}
   \sigma_{S}(\mul{z}) \deftobe \sum_{\pt{m} \in S_{\Z}}
   \mul{z}^{\pt{m}}.
\end{equation*}
(Here we follow the convention of writing $\sigma_{S}(\mul{z})$
for $\sigma_{S}(z_{1},\dotsc,z_{n+1})$ and $\mul{z}^{\pt{m}}$ for
$z_{1}^{m_{1}} \dotsm z_{n+1}^{m_{n+1}}$, where
$\pt{m}=(m_{1},\dotsc,m_{n+1})$.)  The Ehrhart series
$\Ehr_{\P}(t)$ then arises as a specialization of $\sigma_{\cone
\P}(\mul{z})$:
\begin{equation*}
   \Ehr_{\P}(t) = \sigma_{\cone \P} (1, \dotsc, 1, t) \, .
\end{equation*}

Let $\pt{e}_{1}, \dotsc, \pt{e}_{n}, \pt{e}_{n+1}$ denote the
standard basis vectors in $\R^{n+1}$.  Given a closed linear cone
$\C \subset \R^{n+1}$ not containing $-\enp$, define the
projection $\epsilon_{\C} \maps \C \to \bnd \C$ (where ``$\bnd$''
denotes relative boundary) by letting
\[\epsilon_{\C}(\x) \deftobe \x -
\max\setof{\lambda \in \R \st \x - \lambda \enp \in \C} \enp.\]
Given a compact convex set $\J\subset \R^{n}$, we write
$\epsilon_{\J}$ as an abbreviation for $\epsilon_{\cone \J}$.  (We
require $\J$ to be compact so that $\cone \J$ is closed.)  The
\emph{lower envelope} of $\C$ is
\begin{equation*}
   \lenv \C \deftobe \epsilon_{\C}(\C) \, .
\end{equation*}
Thus, the lower envelope of $\C$ is the set of points that are
``vertically minimal'' within $\C$.  The \emph{lower
lattice envelope} of $\C$ is
\[
  \llenv \C \deftobe \epsilon_{\C}(\C_{\Z}) \, .
\]
Thus, the lower lattice envelope is the vertical projection of the
lattice points in $\C$ onto the lower envelope of $\C$.  Observe
that the lower lattice envelope is \emph{not} necessarily the set
$(\lenv \C)_{\Z}$ of lattice points in the lower envelope of $\C$.
In general, some elements of $\llenv \C$ may not be lattice
points.

\begin{thm}[proved on p.\ \pageref{proof:MultivariateBraunsFormula}]
   \label{thm:MultivariateBraunsFormula}
   Suppose that $\J, \K \subset \R^{n}$ are convex sets such that
   $\J$ is compact and $\J \oplus \K$ is a free sum.  Suppose
   moreover that $\llenv \cone \J = (\lenv \cone \J)_{\Z}$.  Then
   \begin{equation}
      \label{eq:MultivariateBraunsEquation}
      \sigma_{\cone (\J \oplus \K)}(\mul{z}) = (1 - z_{n+1}) \,
      \sigma_{\cone \J}(\mul{z}) \, \sigma_{\cone \K}(\mul{z}) \, .
   \end{equation}
\end{thm}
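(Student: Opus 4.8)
The plan is to reduce the identity to a one-dimensional bookkeeping in the height variable $z_{n+1}$, after exploiting the product structure of a free sum. First I would invoke $\SL_n(\Z)$-invariance to place the free sum in standard position. For $A \in \SL_n(\Z)$, the lattice automorphism $g$ of $\Z^{n+1}$ acting by $A$ on the first $n$ coordinates and fixing $\enp$ satisfies $\cone(A\J) = g\,\cone\J$ and induces an invertible monomial substitution $\phi$ in $z_1, \dots, z_n$ (with $\phi$ fixing $z_{n+1}$) for which $\sigma_{\cone(A\J)}(\mul z) = \sigma_{\cone\J}(\phi(\mul z))$, and similarly for $\K$ and for $A(\J\oplus\K) = A\J\oplus A\K$. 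Since $\phi$ fixes $z_{n+1}$, the factor $(1 - z_{n+1})$ is unchanged, so \eqref{eq:MultivariateBraunsEquation} holds for $\J, \K$ if and only if it holds for $A\J, A\K$; the hypothesis $\llenv\cone\J = (\lenv\cone\J)_\Z$ is likewise preserved by $g$. Thus I may assume, writing $d$ for the dimension of the linear span of $\J$, that $\J \subset \lin\setof{\pt e_1, \dots, \pt e_d}$ and $\K \subset \lin\setof{\pt e_{d+1}, \dots, \pt e_n}$ with both containing the origin, so that $\cone\J \subset \lin\setof{\pt e_1, \dots, \pt e_d, \enp}$ and $\cone\K \subset \lin\setof{\pt e_{d+1}, \dots, \pt e_n, \enp}$ meet only along the $\enp$-axis.

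Next I would establish the Minkowski decomposition $\cone(\J\oplus\K) = \cone\J + \cone\K$: a point $s\,\alpha(\mu\a + (1-\mu)\pt b)$ of $\cone(\J\oplus\K)$, with $\a \in \J$, $\pt b \in \K$, $\mu \in [0,1]$, splits as $s\mu\,\alpha(\a) + s(1-\mu)\,\alpha(\pt b) \in \cone\J + \cone\K$, and the reverse inclusion is the same computation run backwards. The crucial consequence is a separation of variables: because the spatial parts of $\cone\J$ and $\cone\K$ occupy complementary coordinate subspaces, a lattice point $\mul m = (m_1, \dots, m_{n+1})$ admits a decomposition $\mul m = \pt u + \pt v$ with $\pt u \in (\cone\J)_\Z$, $\pt v \in (\cone\K)_\Z$ only if $\pt u = (m_1, \dots, m_d, 0, \dots, 0, a)$ and $\pt v = (0, \dots, 0, m_{d+1}, \dots, m_n, b)$; the spatial coordinates are forced, and the sole freedom is the splitting $m_{n+1} = a + b$ of the height into integers.

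The heart of the matter is the evaluation of $(1 - z_{n+1})\,\sigma_{\cone\J}$. Write $h_\J(\pt x)$ for the height of the lower envelope $\lenv\cone\J$ over an integer point $\pt x$ in the projection of $\cone\J$ to $\lin\setof{\pt e_1, \dots, \pt e_d}$. Because $\cone\J$ is a closed convex cone containing $\enp$, its lattice points over $\pt x$ are exactly the $(\pt x, a)$ with $a \in \Z$ and $a \ge h_\J(\pt x)$, so summing the resulting geometric series in $z_{n+1}$ gives
\[
   \sigma_{\cone\J}(\mul z) = \frac{1}{1 - z_{n+1}}\sum_{\pt x} \mul z^{(\pt x,\, \ceil{h_\J(\pt x)})}.
\]
Here is where the hypothesis enters: $\llenv\cone\J = (\lenv\cone\J)_\Z$ says precisely that the projection $\epsilon_{\J}$ of every lattice point lands on a lattice point, that is, $h_\J(\pt x) \in \Z$ for every such $\pt x$. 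Then $\ceil{h_\J(\pt x)} = h_\J(\pt x)$ and $(\pt x, h_\J(\pt x))$ is a genuine lattice point of $\lenv\cone\J$, whence $(1 - z_{n+1})\,\sigma_{\cone\J}(\mul z) = \sigma_{\llenv\cone\J}(\mul z)$, a Laurent series supported on the lower lattice envelope.

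Finally I would read off the convolution $\sigma_{\llenv\cone\J}(\mul z)\,\sigma_{\cone\K}(\mul z)$. By the separation of variables, the base point $\pt p = (\pt x, h_\J(\pt x)) \in \llenv\cone\J$ contributing to $\mul z^{\mul m}$ is determined by the spatial coordinates of $\mul m$, forcing $\pt v = (\pt y, m_{n+1} - h_\J(\pt x))$ with $\pt y = (m_{d+1}, \dots, m_n)$; since $h_\J(\pt x) \in \Z$, this $\pt v$ is a lattice point, so the coefficient of $\mul z^{\mul m}$ is $1$ when $m_{n+1} - h_\J(\pt x) \ge h_\K(\pt y)$ and $0$ otherwise, where $h_\K$ is the analogous lower-envelope height for $\cone\K$. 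This is exactly the condition $m_{n+1} \ge h_\J(\pt x) + h_\K(\pt y)$ for $\mul m$ to lie in $\cone\J + \cone\K = \cone(\J\oplus\K)$, which yields \eqref{eq:MultivariateBraunsEquation}. I expect the main obstacle to be the middle step: recognizing that the hypothesis on $\J$ is equivalent to integrality of the lower-envelope heights, and that this integrality is exactly what collapses $\ceil{h_\J(\pt x)}$ back to $h_\J(\pt x)$ --- without it, $(1 - z_{n+1})\sigma_{\cone\J}$ carries terms off the lattice and the clean convolution breaks down. A secondary technical point, since $\K$ need not be compact, is to treat the possible non-closedness of $\cone\K$ consistently on both sides, so that the inequality tests defining membership in $\cone(\J\oplus\K)$ match those governing the lattice points of $\cone\K$ along the relative boundary.
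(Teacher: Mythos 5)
Your argument is correct and is essentially the paper's own proof written out in coordinates: your ``separation of variables'' and convolution step is Proposition~\ref{prop:LatticeFreeSumIntoDisjointBoundaryandCone} (the disjoint decomposition of $\cone(\J\oplus\K)_{\Z}$ into translates of $\cone\K$ indexed by $\llenv\cone\J$), and your fiberwise geometric-series identification of $(1-z_{n+1})\,\sigma_{\cone\J}$ with $\sigma_{\lenv\cone\J}$ under the hypothesis is exactly the equivalence (a)$\Leftrightarrow$(c) of Proposition~\ref{settogeneratingfctprop}. The two technical points you flag (integrality of the envelope heights, and the matching of open/closed fibers of $\cone\K$ on both sides) are real but resolve just as you indicate, so no gap remains.
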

We call equation \eqref{eq:MultivariateBraunsEquation} the
\emph{multivariate Braun equation}.  Our second main result states
that, when $\J$ and $\K$ are rational polytopes, the converse of
Theorem ~\ref{thm:MultivariateBraunsFormula} also holds.  (Whether
the converse holds for free sums $\J \oplus \K$ of arbitrary
convex sets is still an open question.)  Given a rational polytope
$\P$ containing the origin, we observe in Proposition
~\ref{ratflexiveprop} below that $\llenv \cone \P = (\lenv \cone
\P)_{\Z}$ if and only if the polar dual $\dual \P$ of $\P$
(relative to its linear span) is a lattice polyhedron.  We show
that, if a free sum of rational polytopes satisfies the
multivariate Braun equation, then the dual of one of those
polytopes is a lattice polyhedron.

\begin{thm}[proved on p.\ \pageref{proof:ConverseBraunFormula}]
   \label{thm:ConverseBraunFormula}
   Let $\P, \Q \subset \R^{n}$ be rational polytopes such that $\P
   \oplus \Q$ is a free sum.  Then
   \begin{equation}
      \label{eq:BraunsFormulaHolds}
      \sigma_{\cone (\P \oplus \Q)}(\mul{z}) = (1 -
      z_{n+1})\sigma_{\cone \P}(\mul{z}) \sigma_{\cone
      \Q}(\mul{z})
   \end{equation}
   if and only if either $\dual\P$ or $\dual\Q$ is a lattice
   polyhedron.
\end{thm}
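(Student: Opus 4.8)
The plan is to prove Theorem~\ref{thm:ConverseBraunFormula} by combining Theorem~\ref{thm:MultivariateBraunsFormula} with Proposition~\ref{ratflexiveprop} to handle the forward direction, and then to establish the converse by a careful analysis of what failure of the hypothesis $\llenv \cone \P = (\lenv \cone \P)_{\Z}$ does to the generating function. First I would dispose of the ``if'' direction: by Proposition~\ref{ratflexiveprop}, the dual $\dual\P$ being a lattice polyhedron is equivalent to $\llenv \cone \P = (\lenv \cone \P)_{\Z}$, so if $\dual\P$ is a lattice polyhedron then the hypothesis of Theorem~\ref{thm:MultivariateBraunsFormula} is met (with $\P$ playing the role of $\J$, which is compact since it is a polytope), and equation~\eqref{eq:BraunsFormulaHolds} follows immediately. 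The symmetric case where $\dual\Q$ is a lattice polyhedron follows by swapping the roles of $\P$ and $\Q$, noting that the multivariate Braun equation~\eqref{eq:BraunsFormulaHolds} is symmetric in its two factors.

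The substance of the proof lies in the ``only if'' direction: I must show that if \emph{neither} $\dual\P$ nor $\dual\Q$ is a lattice polyhedron, then equation~\eqref{eq:BraunsFormulaHolds} fails. By Proposition~\ref{ratflexiveprop}, this means neither $\llenv \cone \P = (\lenv \cone \P)_{\Z}$ nor $\llenv \cone \Q = (\lenv \cone \Q)_{\Z}$ holds; equivalently, each of $\cone\P$ and $\cone\Q$ has a lattice point whose vertical projection onto its lower envelope is a non-lattice point. The strategy is to locate a specific lattice point of $\cone(\P \oplus \Q)$ whose contribution to $\sigma_{\cone(\P \oplus \Q)}$ is not matched by the right-hand side $(1 - z_{n+1})\sigma_{\cone \P}\,\sigma_{\cone \Q}$, thereby producing an explicit discrepancy between the two Laurent series. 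I would examine how lattice points of $\cone(\P \oplus \Q)$ decompose relative to the free-sum structure: because $\P$ and $\Q$ have orthogonal coordinate linear spans meeting only at the origin, a point of $\cone(\P \oplus \Q)$ at height $t$ arises from a convex combination splitting the dilation between a $\P$-part and a $\Q$-part, and the interplay between the lower envelopes of the two cones governs which lattice points appear in the product series.

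The main obstacle, and the technical heart of the argument, will be pinning down the coefficient discrepancy precisely. The factor $(1 - z_{n+1})$ on the right encodes exactly the telescoping that, under the hypothesis $\llenv \cone \J = (\lenv \cone \J)_{\Z}$, causes the lower-envelope lattice points of one summand to cancel cleanly against the product; when that hypothesis fails for both summands, I expect the non-lattice projection points to create an uncancelled term. Concretely, I would compare the two sides by specializing or by extracting the coefficient of a judiciously chosen monomial $\mul{z}^{\pt{m}}$ — most naturally one corresponding to a lattice point sitting just above the lower envelope in a direction where the projection fails to be integral — and show that the left-hand coefficient counts an honest lattice point of $\cone(\P \oplus \Q)$ while the right-hand coefficient, built from the convolution of $\sigma_{\cone \P}$ and $\sigma_{\cone \Q}$ times $(1 - z_{n+1})$, gives a different value. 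Making this comparison rigorous requires understanding exactly how the product $\sigma_{\cone \P}\,\sigma_{\cone \Q}$ distributes lattice points across the two orthogonal span directions and at what heights the $(1-z_{n+1})$ factor induces cancellation; establishing that the failure of integrality of the projection in \emph{both} summands is genuinely necessary (not merely in one) is the delicate point, since it must be shown that failure in a single summand is repaired by the other summand's good behavior, matching the ``if'' direction.
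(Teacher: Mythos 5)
Your ``if'' direction is correct and is exactly the paper's (Proposition~\ref{ratflexiveprop} plus Theorem~\ref{thm:MultivariateBraunsFormula}, i.e.\ Corollary~\ref{rationalbraunmultivarcor}). The ``only if'' direction, however, is a plan rather than a proof, and the plan stops precisely at the point where the real work begins. You propose to exhibit a monomial whose coefficient differs on the two sides when \emph{neither} dual is a lattice polyhedron, and you correctly flag as ``the delicate point'' the need to show that bad behavior in one summand can only be repaired by good behavior in the other --- but you give no mechanism for resolving it. The paper resolves it with machinery you have not supplied: the stratification of the rind of $\cone\P$ into shifted lower envelopes $\lenv\cone^{i}\P$ indexed by $i/d(\P)$ (Proposition~\ref{prop:RindAndStrata}) and the resulting exact identity $\sigma_{\cone(\P\oplus\Q)}=\sum_{i}\sigma_{\lenv\cone^{i}\P}\,\sigma_{\cone_{i}\Q}$ (Theorem~\ref{thm:NonBraunFormulaCase}). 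Subtracting the Braun expression turns the hypothesis into $\sum_{i}\sigma_{\lenv\cone^{i}\P}\,(\sigma_{\cone_{i}\Q}-\sigma_{\cone\Q})=0$, and since every coefficient here is nonnegative, the maximal $j$ with $(\lenv\cone^{j}\P)_{\Z}\ne\emptyset$ forces $(\cone_{j}\Q)_{\Z}=(\cone\Q)_{\Z}$. This positivity step is what makes ``one uncancelled term'' rigorous, and nothing in your outline substitutes for it.

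The second missing idea is the symmetry argument, without which the conclusion about $\Q$ cannot be reached. Knowing only that $(\cone_{j}\Q)_{\Z}\setminus(\cone\Q)_{\Z}=\emptyset$ tells you that the shifted lower envelopes $\lenv\cone\Q-\rho\,\enp$ are lattice-point-free for $0<\rho\le j/d(\P)$; to conclude that $\dual\Q$ is a lattice polyhedron you need this for \emph{all} $0<\rho<1$ on the positive side. The paper gets there via Lemma~\ref{lem:LowerEnvelopesAreSymmetric} (inversion through a rational multiple of $\enp$ preserves the existence of lattice points on the rays making up a lower envelope), applied twice: once to $\cone\P$ to prove $j/d(\P)\ge\tfrac12$ from the maximality of $j$, and once to $\cone\Q$ so that the interval $(0,j/d(\P)]$ together with its reflection $[1-j/d(\P),1)$ covers all of $(0,1)$; Lemma~\ref{lem:LowerEnvelopesContainingLatticePointsHaveRationalVertices} then rules out irrational shifts. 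Your proposal contains no analogue of this reflection trick, and a direct ``choose a judicious monomial and compare coefficients'' attack does not obviously produce one: the lattice point of $\cone\Q$ witnessing the failure of $\dual\Q$ may sit at a shift level that, a priori, has nothing to do with the shift level $j/d(\P)$ at which $\P$ fails, and it is exactly the $j/d(\P)\ge\tfrac12$ inequality that forces the two to interact. As written, the proposal would not compile into a proof without these two ingredients being invented.
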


The univariate analogue of Theorem \ref{thm:ConverseBraunFormula}
is a consequence:

\begin{thm}[proved on p.\ \pageref{proof:ConverseBraunUnivariate}]
   \label{thm:ConverseBraunUnivariate}
   Let $\P, \Q \subset \R^{n}$ be rational polytopes such that $\P
   \oplus \Q$ is a free sum.  If either $\dual \P$ or $\dual \Q$
   is a lattice polyhedron, then
   \begin{equation}
      \label{eq:ConverseBraunEhrhart}
      \Ehr_{\P \oplus \Q}(t) = (1-t)\Ehr_{\P}(t) \Ehr_{\Q}(t)
   \end{equation}
   and hence
   \begin{equation}
      \label{eq:ConverseBraunDelta}
      \delta_{\P \oplus \Q}(t) =
      \frac{(1-t)(1-t^{\lcm(\den\P,\den\Q)})^{\dim\P + \dim \Q +
      1}}{(1-t^{\den \P})^{\dim\P + 1}(1 - t^{\den\Q})^{\dim\Q +
      1}}\,\delta_{\P}(t) \, \delta_{\Q}(t).
   \end{equation}
   Conversely, if either equation \eqref{eq:ConverseBraunEhrhart}
   or equation \eqref{eq:ConverseBraunDelta} holds, then either
   $\dual \P$ or $\dual \Q$ is a lattice polyhedron.  In
   particular, if $\P, \Q \subset \R^{n}$ are lattice polytopes
   such that $\P \oplus \Q$ is a free sum, then
   \begin{equation}
      \label{eq:ConverseBraunDeltaLattice}
      \delta_{\P \oplus \Q} = \delta_{\P}\, \delta_{\Q}
   \end{equation}
   if and only if either $\dual \P$ or $\dual \Q$ is a lattice
   polyhedron.
\end{thm}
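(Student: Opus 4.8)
The plan is to obtain everything by specializing the multivariate characterization of Theorem~\ref{thm:ConverseBraunFormula} at $\mul{z} \mapsto (1,\dotsc,1,t)$, and then to argue that this specialization loses no information, so that the univariate identities are \emph{equivalent} to the multivariate one rather than merely implied by it. For the forward direction, suppose $\dual\P$ or $\dual\Q$ is a lattice polyhedron. By Theorem~\ref{thm:ConverseBraunFormula}, the multivariate equation \eqref{eq:BraunsFormulaHolds} holds. Since at each fixed value of the last coordinate only finitely many lattice points of these pointed cones contribute, the substitution $z_1 = \dotsb = z_n = 1$, $z_{n+1} = t$ is well defined on the relevant series and compatible with their products; using $\Ehr_\P(t) = \sigma_{\cone\P}(1,\dotsc,1,t)$ (and its analogues for $\Q$ and $\P\oplus\Q$), this specialization of \eqref{eq:BraunsFormulaHolds} is exactly \eqref{eq:ConverseBraunEhrhart}.

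Next I would pass from \eqref{eq:ConverseBraunEhrhart} to \eqref{eq:ConverseBraunDelta}. Two elementary facts about free sums are needed: because the linear spans of $\P$ and $\Q$ are, up to $\SL_n(\Z)$, complementary orthogonal coordinate subspaces meeting only at the origin, one has $\dim(\P\oplus\Q) = \dim\P + \dim\Q$; and because $\operatorname{vert}(\P\oplus\Q) \subseteq \operatorname{vert}\P \cup \operatorname{vert}\Q$, a dilate $k(\P\oplus\Q)$ is a lattice polytope exactly when both $k\P$ and $k\Q$ are, whence $\den(\P\oplus\Q) = \lcm(\den\P,\den\Q)$. Substituting Ehrhart's theorem $\Ehr_\P(t) = \delta_\P(t)/(1-t^{\den\P})^{\dim\P+1}$ and its analogues into \eqref{eq:ConverseBraunEhrhart} and solving for $\delta_{\P\oplus\Q}(t)$ yields \eqref{eq:ConverseBraunDelta}. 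Since this computation is reversible, \eqref{eq:ConverseBraunEhrhart} and \eqref{eq:ConverseBraunDelta} are equivalent. In particular, when $\P$ and $\Q$ are lattice polytopes we have $\den\P = \den\Q = 1$, so the rational prefactor in \eqref{eq:ConverseBraunDelta} collapses to $(1-t)^{0}=1$ and \eqref{eq:ConverseBraunDelta} reduces to \eqref{eq:ConverseBraunDeltaLattice}.

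The crux is the converse: assuming \eqref{eq:ConverseBraunEhrhart} (equivalently \eqref{eq:ConverseBraunDelta}), deduce that $\dual\P$ or $\dual\Q$ is a lattice polyhedron. By Theorem~\ref{thm:ConverseBraunFormula} it suffices to show that the univariate identity forces the multivariate equation \eqref{eq:BraunsFormulaHolds}. The hard part, and the main obstacle, is that the specialization $z_1 = \dotsb = z_n = 1$ could in principle create cancellation, so that $D \deftobe (1-z_{n+1})\sigma_{\cone\P}(\mul{z})\sigma_{\cone\Q}(\mul{z}) - \sigma_{\cone(\P\oplus\Q)}(\mul{z})$ might vanish after specialization without vanishing itself. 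I would remove this obstacle by showing that $D$ has \emph{nonnegative} coefficients. The key geometric identity is $\cone(\P\oplus\Q) = \cone\P + \cone\Q$, so that $\sigma_{\cone\P}\sigma_{\cone\Q} = \sum_{\pt m} r(\pt m)\,\mul{z}^{\pt m}$, where $r(\pt m)$ counts the representations $\pt m = \a + \pt{b}$ with $\a \in (\cone\P)_{\Z}$ and $\pt{b} \in (\cone\Q)_{\Z}$; the coefficient of $\mul{z}^{\pt m}$ in $(1-z_{n+1})\sigma_{\cone\P}\sigma_{\cone\Q}$ is therefore $r(\pt m) - r(\pt m - \enp)$.

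Because $\enp$ lies in both $\cone\P$ and $\cone\Q$ (as $\pt{0}\in\P\cap\Q$) and these convex cones are closed under addition, the shift $\a \mapsto \a + \enp$ injects the representations of $\pt m - \enp$ into those of $\pt m$; moreover, repeatedly transferring a copy of $\enp$ from the $\P$-summand to the $\Q$-summand and using that $\cone\P$ is pointed produces, for every lattice point $\pt m \in \cone\P + \cone\Q$, at least one representation outside the image of this injection. Hence $r(\pt m) - r(\pt m - \enp) \ge 1$ whenever $\pt m \in \cone(\P\oplus\Q)$, while $r(\pt m)=r(\pt m - \enp)=0$ otherwise, so every coefficient of $D$ is nonnegative. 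A nonzero series with nonnegative coefficients cannot vanish at $z_1 = \dotsb = z_n = 1$, since grouping terms by the last coordinate leaves each coefficient of the resulting series in $t$ nonnegative with at least one strictly positive. Thus the univariate identity holds if and only if $D = 0$, i.e.\ if and only if the multivariate equation \eqref{eq:BraunsFormulaHolds} holds; applying Theorem~\ref{thm:ConverseBraunFormula} then completes the converse, and with it the proof and its lattice-polytope specialization.
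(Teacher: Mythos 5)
Your forward direction and the derivation of \eqref{eq:ConverseBraunDelta} from \eqref{eq:ConverseBraunEhrhart} are fine, and your overall strategy for the converse --- show that the difference between the two sides of the multivariate equation \eqref{eq:BraunsFormulaHolds} is a series of constant sign, so that it cannot cancel under the specialization $z_1=\dots=z_n=1$ --- is exactly the paper's. But your proof of the sign claim is wrong, and in fact you have the inequality backwards. You assert that $r(\pt{m})\ge r(\pt{m}-\enp)+1$ for every lattice point $\pt{m}\in\cone(\P\oplus\Q)$, which presupposes that every such $\pt{m}$ admits at least one representation $\pt{m}=\pt{a}+\pt{b}$ with $\pt{a}\in(\cone\P)_{\Z}$ and $\pt{b}\in(\cone\Q)_{\Z}$. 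That is false precisely in the situation the converse is about: the whole point of the lower \emph{lattice} envelope is that $(\cone\P)_{\Z}+(\cone\Q)_{\Z}$ can be a proper subset of $\cone(\P\oplus\Q)_{\Z}$. Concretely, take $\P=[0,\tfrac{2}{3}]\times\{0\}$ and $\Q=\{0\}\times[0,\tfrac{2}{3}]$ in $\R^{2}$, so that neither dual is a lattice polyhedron. The point $\pt{m}=(1,1,3)$ lies in $\cone(\P\oplus\Q)_{\Z}$ since $1+1\le\tfrac{2}{3}\cdot 3$, but any decomposition $(1,1,3)=(1,0,h_1)+(0,1,h_2)$ into lattice points of the two cones forces $h_1,h_2\ge\lceil 3/2\rceil=2$ and hence $h_1+h_2\ge 4>3$; so $r(\pt{m})=0$ while $\sigma_{\cone(\P\oplus\Q)}$ has coefficient $1$ at $\pt{m}$, and your $D$ has a strictly \emph{negative} coefficient there. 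Your lemma ``transferring $\enp$ from the $\P$-summand produces a representation outside the image of the injection'' silently assumes there is a representation to start from.

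The correct statement is the opposite one: $\tau\deftobe\sigma_{\cone(\P\oplus\Q)}-(1-z_{n+1})\,\sigma_{\cone\P}\,\sigma_{\cone\Q}$ has nonnegative coefficients. This is what the paper establishes, and it requires the machinery of Section~\ref{NonBraunFormulaCase}: by Theorem~\ref{thm:NonBraunFormulaCase} and Proposition~\ref{prop:RindAndStrata}(d), $\tau=\sum_{i=0}^{d(\P)-1}\sigma_{\lenv\cone^{i}\P}\,(\sigma_{\cone_{i}\Q}-\sigma_{\cone\Q})$, a sum of products of series with coefficients in $\{0,1\}$ (using $\cone\Q\subset\cone_{i}\Q$), and moreover Theorem~\ref{thm:ConverseBraunFormula} shows $\tau\ne 0$ when neither dual is a lattice polyhedron. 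Once the sign is corrected and this nonnegativity is supplied, your final step --- a nonzero Laurent series with nonnegative coefficients survives the specialization $\mul{z}=(1,\dots,1,t)$ --- goes through verbatim. So the architecture of your argument matches the paper's, but the one inequality that carries the entire converse is justified by a false claim and needs to be replaced by the rind/strata decomposition (or some other correct argument).
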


After laying the groundwork for our approach to free sums in
Section ~\ref{sec:FreeSumDecompositions}, we prove Theorem
~\ref{thm:MultivariateBraunsFormula} and various corollaries,
including Theorem ~\ref{Braunthm}, in Section
~\ref{sec:BraunFormula}.  In Section ~\ref{NonBraunFormulaCase},
we give an expression for $\sigma_{\cone(\P \oplus \K)}$ when $\P
\oplus \K$ is an arbitrary free sum in which $\P$ is a rational
polytope (Theorem~\ref{thm:NonBraunFormulaCase}).  We then use
this expression to prove Theorems~\ref{thm:ConverseBraunFormula}
and~\ref{thm:ConverseBraunUnivariate}.

Although Sections ~\ref{sec:BraunFormula} and
~\ref{NonBraunFormulaCase} address only the case where $\J \join
\K$ is a free sum, our approach is not confined to this situation.
Section ~\ref{sec:nonlatticepoints} introduces the notion of
\emph{affine free sums} $\J \oplus \K$, where $\J$ and $\K$ may
intersect at an arbitrary rational point.  We derive formulas for
the lattice-point generating functions of cones over affine free
sums $\J \oplus \K$ under certain conditions on $\J$ and $\K$.
One case of interest that satisfies these conditions is an affine
free sum $\P \oplus \K$ where $\P$ is a Gorenstein polytope of
index $k$ intersecting an orthogonal convex set $\K$ at the unique
point $\p \in \P$ such that $k\p$ is a lattice point in the
relative interior of $k\P$ (Corollary
~\ref{cor:GorensteinBraunCor}).

\section{Decompositions of cones over free sums}
\label{sec:FreeSumDecompositions}

We begin our study of the generating function $\sigma_{\cone(\J
\oplus \K)}$ from the vantage point of the following easy
identity: Given any convex sets $\J, \K \subset \R^{n}$, the
convex hull $\J \join \K$ of their union satisfies
\begin{equation}\label{freesumminkowskyidentity}
   \cone(\J \join \K) = \cone \J + \cone \K \, ,
\end{equation}
where the sum on the right is the Minkowski sum $S + T \deftobe
\setof{s + t \st s \in S, t \in T}$.
The goal of this section is to provide two refinements to equation
\eqref{freesumminkowskyidentity}, first by making the equation
``disjoint'', and then by restricting the equation to lattice
points.  As it stands, equation \eqref{freesumminkowskyidentity}
``double counts'' elements of $\cone(\J \join \K)$, in the sense
that there are many ways to express an element of the left-hand
side as a sum from the right-hand side.  Proposition
~\ref{prop:FreeSumIntoDisjointBoundaryandCone} below gives a
non-double-counting version of
equation~\eqref{freesumminkowskyidentity} under certain
conditions on $\J$ and $\K$. %
Proposition ~\ref{prop:LatticeFreeSumIntoDisjointBoundaryandCone}
below provides a similar expression for the integer lattice points
in $\cone(\J \oplus \K)$ when $\J \oplus \K$ is a free sum.

First we make a few additional notational remarks: We write $\pi
\maps \R^{n+1} \to \R^{n}$ for the orthogonal projection
\begin{equation*}
   \pi \maps (x_{1}, \dotsc, x_{n}, x_{n+1}) \mapsto (x_{1},
   \dotsc, x_{n}) \, .
\end{equation*}
Given a subset $S$ of $\R^{n}$ or $\R^{n+1}$, let $\lin S$ be the
linear span of $S$.  We say that two sublattices $\L, \M \subset
\Z^{n}$ are \emph{complementary sublattices of $\Z^{n}$} if each
element of $(\lin(\L \cup \M))_{\Z}$ is the sum of a unique
element of $\L$ and a unique element of $\M$.  Hence, when $\J
\oplus \K$ is a free sum, $(\lin \J)_{\Z}$ and $(\lin \K)_{\Z}$
are complementary sublattices of $\Z^{n}$.

%

Equation \eqref{freesumminkowskyidentity} says that
\begin{equation*}
\label{eq:FreeSumIntoUnionOfCones}
   \cone(\J \join \K) = \bigcup_{\pt{x} \in \cone \J} \left(\pt{x}
   + \cone \K \right).
\end{equation*}
Using the concept of the lower envelope (defined in Section
~\ref{sec:Introduction}), we can replace the union above by a
disjoint union.  This yields the desired ``disjoint'' version of
equation \eqref{freesumminkowskyidentity}.  We use $\bigsetsum$ to
denote disjoint union.

\begin{prop}\label{prop:somewhatintermediate}
   Suppose that $\J, \K \subset \R^{n}$ are convex sets with $\J$
   compact and $\pt{0} \in \K$.  Suppose in addition that the
   linear spans of $\J$ and $\K$ intersect trivially.  Then
   \begin{equation*}
      \cone(\J\oplus \K) = \bigsetsum_{\pt{x} \in \lenv \cone \J}
      (\pt{x} + \cone \K) \, .
   \end{equation*}
   \label{prop:FreeSumIntoDisjointBoundaryandCone}
\end{prop}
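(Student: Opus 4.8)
The plan is to begin from the union form of equation \eqref{freesumminkowskyidentity}, namely $\cone(\J \join \K) = \bigcup_{\x \in \cone \J}(\x + \cone \K)$, and to sharpen it in two stages: first shrink the index set from all of $\cone \J$ down to its lower envelope, and then show that the resulting union is disjoint. The fact I will lean on repeatedly is that $\pt{0} \in \K$ forces $\enp = \alpha(\pt{0}) \in \cone \K$, so $\cone \K$ contains the entire vertical ray $\R_{\geq 0}\enp$; since $\cone \K$ is a convex cone it is closed under addition, and hence $\lambda \enp + \cone \K \subseteq \cone \K$ for every $\lambda \geq 0$.

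First I would restrict the union to $\lenv \cone \J$. Given $\x \in \cone \J$, put $\pt{y} \deftobe \epsilon_{\J}(\x) \in \lenv \cone \J$; by definition $\x = \pt{y} + \mu \enp$ for some $\mu \geq 0$ (the value $\mu = 0$ is feasible because $\x \in \cone \J$). Using $\mu \enp + \cone \K \subseteq \cone \K$, I obtain $\x + \cone \K = \pt{y} + \mu\enp + \cone \K \subseteq \pt{y} + \cone \K$, so every translate in the original union is absorbed into one indexed by a point of the lower envelope. The reverse inclusion is immediate from $\lenv \cone \J \subseteq \cone \J$, which holds because $\epsilon_{\J}$ maps into $\bnd \cone \J \subseteq \cone \J$.

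The substance is the disjointness. Suppose $\pt{y} + \pt{k} = \pt{y}' + \pt{k}'$ with $\pt{y}, \pt{y}' \in \lenv \cone \J$ and $\pt{k}, \pt{k}' \in \cone \K$; I want $\pt{y} = \pt{y}'$. Here I would apply $\pi$ and invoke $\lin \J \cap \lin \K = \setof{\pt{0}}$. Every point of $\cone \J$ has the form $(\lambda \pt{a}, \lambda)$ with $\pt{a} \in \J$, so $\pi(\cone \J) \subseteq \lin \J$, and likewise $\pi(\cone \K) \subseteq \lin \K$. Projecting the equation gives $\pi(\pt{y}) - \pi(\pt{y}') = \pi(\pt{k}') - \pi(\pt{k})$, with the left side in $\lin \J$ and the right side in $\lin \K$; by trivial intersection both vanish, so $\pi(\pt{y}) = \pi(\pt{y}')$, i.e., $\pt{y}$ and $\pt{y}'$ lie on a common vertical line.

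To close the argument I would show that the lower envelope meets each vertical line in at most one point. Every point of $\cone \J$ has nonnegative last coordinate, and $\cone \J$ is closed (this is exactly where compactness of $\J$ is used) and convex, so its intersection with any vertical line is a closed convex set bounded below, which therefore has a unique lowest point; the lower envelope is precisely the set of these minima, one per line (equivalently, $\epsilon_{\J}$ restricts to the identity on $\lenv \cone \J$). Hence $\pi(\pt{y}) = \pi(\pt{y}')$ forces $\pt{y} = \pt{y}'$, completing the disjointness. I expect this last step---verifying that $\lenv \cone \J$ is a genuine single-valued section of the vertical projection $\pi$---to be the one demanding real care, since it is where closedness of $\cone \J$ and the precise definition of $\epsilon_{\J}$ combine; the projection-and-trivial-intersection maneuver, by contrast, is essentially formal.
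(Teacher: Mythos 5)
Your proof is correct and follows essentially the same route as the paper's: the same projection-plus-trivial-intersection argument for disjointness, and the same absorption of the vertical offset $\mu\enp$ into $\cone\K$ (via $\pt{0}\in\K$) to reduce the index set to the lower envelope. The only differences are cosmetic --- you reverse the order of the two halves and spell out in slightly more detail why $\lenv\cone\J$ meets each vertical line exactly once, a fact the paper simply asserts.
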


\begin{proof}
   We first show that the union on the right-hand side is a
   disjoint union.  Suppose that
   \begin{equation*}
      \pt{x}_{1} + \pt{y}_{1} =
      \pt{x}_{2} + \pt{y}_{2}
   \end{equation*}
   for some $\pt{x}_{1}, \pt{x}_{2} \in \lenv \cone \J$ and
   $\pt{y}_{1}, \pt{y}_{2} \in \cone \K$.  Then we have
   $\pi(\pt{x}_{1}) + \pi(\pt{y}_{1}) = \pi(\pt{x}_{2}) +
   \pi(\pt{y}_{2})$.  Hence
   \begin{equation*}
      \pi(\pt{x}_{1}) - \pi(\pt{x}_{2}) =
      \pi(\pt{y}_{2})-\pi(\pt{y}_{1})\in \lin \J\cap \lin \K
   \end{equation*}
   because the left-hand side of the equality is in $\lin \J$
   while the right-hand side is in $\lin \K$.  Since $\lin \J \cap
   \lin \K = \setof{\pt{0}}$, it follows that $\pi(\pt{x}_{1}) =
   \pi(\pt{x}_{2})$.  Now, the preimage
   $\pi^{-1}(\pi(\pt{x}_{1}))$ contains exactly one point in
   $\lenv \cone \J$, so $\pt{x}_{1} = \pt{x}_{2}$, proving
   disjointness.
   
   It remains only to show that
   \begin{equation*}
      \bigsetsum_{\pt{x} \in \lenv \cone \J} (\pt{x} + \cone \K) =
      \bigcup_{\pt{x} \in \cone \J} \left(\pt{x} + \cone \K
      \right).
   \end{equation*}
   The left-hand side is contained in the right-hand side because
   $\J$ is compact, so $\lenv \cone \J \subset \cone \J$.
   Conversely, if $\pt{w} \in \pt{x} + \cone \K$ for some $\pt{x}
   \in \cone \J$, then
   \begin{equation}
   \label{eq:DropToEnvelope}
      \pt{w} - (\pt{x} - \epsilon_\J(\pt{x})) \in
      \epsilon_\J(\pt{x}) + \cone \K \, .
   \end{equation}
   Now, $\pt{x} - \epsilon_\J(\pt{x})$ is a nonnegative multiple
   of $\enp$, which is in $\cone \K$ because $\pt{0} \in \K$.
   Thus, adding $\pt{x} - \epsilon_\J(\pt{x})$ to both sides of
   \eqref{eq:DropToEnvelope} yields $\pt{w} \in
   \epsilon_\J(\pt{x}) + \cone \K$.  Since $\epsilon_\J(\pt{x})
   \in \lenv \cone \J$, this proves the claim.
\end{proof}

Our ultimate goal is to understand the generating function
$\sigma_{\cone(\J \oplus \K)}$, so we need a version of the
disjoint union in Proposition
~\ref{prop:FreeSumIntoDisjointBoundaryandCone} that is restricted
to the lattice points in $\cone(\J \oplus \K)$.  This is provided
by the following proposition.  See also Figure
~\ref{fig:LatticeFreeSumIntoDisjointBoundaryandCone}.

\begin{prop}
\label{prop:LatticeFreeSumIntoDisjointBoundaryandCone}
   Suppose that $\J, \K \subset \R^{n}$ are convex sets such that
   $\J$ is compact and $\J \oplus \K$ is a free sum.  Then
   \begin{equation*}
      \cone(\J\oplus \K)_{\Z} = \bigsetsum_{\pt{x} \in \llenv
      \cone \J} (\pt{x} + \cone \K)_{\Z} \, .
   \end{equation*}
\end{prop}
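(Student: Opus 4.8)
The plan is to obtain the lattice-point identity directly from Proposition~\ref{prop:FreeSumIntoDisjointBoundaryandCone} by intersecting that decomposition with $\Z^{n+1}$; the only real work is to show that the ``feet'' $\pt{x} \in \lenv \cone \J$ that actually carry lattice points are exactly those in $\llenv \cone \J$. Since a free sum satisfies the hypotheses of Proposition~\ref{prop:FreeSumIntoDisjointBoundaryandCone} (both summands contain $\pt{0}$ and $\lin \J \cap \lin \K = \setof{\pt{0}}$), and since $\llenv \cone \J = \epsilon_{\J}((\cone \J)_{\Z}) \subset \epsilon_{\J}(\cone \J) = \lenv \cone \J$, the union on the right-hand side is automatically disjoint: distinct feet in $\lenv \cone \J$ yield disjoint translates $\pt{x} + \cone \K$ by Proposition~\ref{prop:FreeSumIntoDisjointBoundaryandCone}, a fortiori disjoint after passing to lattice points. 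The inclusion $\supseteq$ is then immediate, since for $\pt{x} \in \llenv \cone \J \subset \lenv \cone \J$ we have $\pt{x} + \cone \K \subset \cone(\J \oplus \K)$, so every lattice point of $\pt{x} + \cone \K$ lies in $\cone(\J \oplus \K)_{\Z}$.

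For the reverse inclusion I would take a lattice point $\pt{m} \in \cone(\J \oplus \K)$ and use Proposition~\ref{prop:FreeSumIntoDisjointBoundaryandCone} to write it uniquely as $\pt{m} = \pt{x} + \pt{y}$ with $\pt{x} \in \lenv \cone \J$ and $\pt{y} \in \cone \K$; the goal is to promote $\pt{x}$ to an element of $\llenv \cone \J$. First I would note that $\pi(\pt{x}) \in \lin \J$ and $\pi(\pt{y}) \in \lin \K$ (as $\pi(\cone \J) \subset \lin \J$ and $\pi(\cone \K) \subset \lin \K$), so that $\pi(\pt{m}) = \pi(\pt{x}) + \pi(\pt{y})$ is a decomposition of the lattice point $\pi(\pt{m}) \in (\lin(\J \cup \K))_{\Z}$ into a $\lin \J$-part and a $\lin \K$-part. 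Because $\J \oplus \K$ is a free sum, $(\lin \J)_{\Z}$ and $(\lin \K)_{\Z}$ are complementary sublattices of $\Z^{n}$, so $\pi(\pt{m})$ admits a unique decomposition into lattice vectors $\pt{u} \in (\lin \J)_{\Z}$ and $\pt{v} \in (\lin \K)_{\Z}$; uniqueness of the real decomposition, which holds because $\lin \J \cap \lin \K = \setof{\pt{0}}$, then forces $\pi(\pt{x}) = \pt{u}$ and $\pi(\pt{y}) = \pt{v}$. In particular $\pi(\pt{x})$ is a lattice point.

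It then remains to lift $\pt{x}$ to an integer height without disturbing its projection to the lower envelope. Since $\pt{0} \in \J$ we have $\enp = \alpha(\pt{0}) \in \cone \J$, so $\pt{z} \deftobe \pt{x} + (\ceil{x_{n+1}} - x_{n+1})\,\enp$ lies in $\cone \J$ and is a lattice point, its first $n$ coordinates being the lattice vector $\pi(\pt{x})$ and its last coordinate the integer $\ceil{x_{n+1}}$. Because $\pt{x} \in \lenv \cone \J$ is already vertically minimal over $\pi(\pt{x})$, we have $\epsilon_{\J}(\pt{z}) = \pt{x}$, whence $\pt{x} \in \epsilon_{\J}((\cone \J)_{\Z}) = \llenv \cone \J$. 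Then $\pt{m} = \pt{x} + \pt{y} \in \pt{x} + \cone \K$ is a lattice point, so $\pt{m} \in (\pt{x} + \cone \K)_{\Z}$ with $\pt{x} \in \llenv \cone \J$, completing the inclusion. I expect the crux to be exactly the middle step: converting the real decomposition $\pt{m} = \pt{x} + \pt{y}$ into the conclusion that $\pi(\pt{x})$ is integral, which is where the complementary-sublattice structure of a free sum (rather than mere triviality of $\lin \J \cap \lin \K$) is essential. The vertical lift afterward is routine.
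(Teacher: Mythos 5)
Your proposal is correct and follows essentially the same route as the paper: both inclusions are derived from Proposition~\ref{prop:FreeSumIntoDisjointBoundaryandCone}, the integrality of $\pi(\pt{x})$ is extracted from the complementary-sublattice property of a free sum, and $\pt{x}$ is then exhibited as $\epsilon_{\J}$ of a lattice point on the vertical line above it. The only difference is that you spell out the disjointness and the vertical lift in slightly more detail than the paper does.
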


\begin{figure}[tbp]
   \centering
   \includegraphics{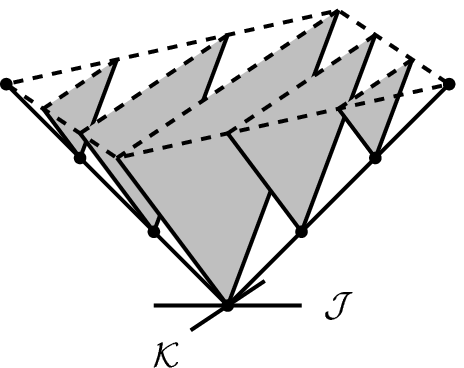}
   \caption{A depiction of $\cone(\J \oplus \K)$.  The dots
   indicate elements of $\llenv \cone \J$.  The shaded regions
   represent translations of $\cone \K$ by elements of $\llenv
   \cone \J$.  The import of
   Proposition ~\ref{prop:LatticeFreeSumIntoDisjointBoundaryandCone}
   is that all lattice points in $\cone(\J \oplus \K)$ are within
   these shaded regions.}
   \label{fig:LatticeFreeSumIntoDisjointBoundaryandCone}
\end{figure}

\begin{proof}
   The elements of the right-hand side are lattice points that are
   contained in $\cone(\J \oplus \K)$ by the previous proposition.
   Hence, such elements are in the left-hand side.
   
   To prove the converse containment, let $\pt{w} \in \cone(\J
   \oplus \K)_{\Z}$ be given.  By Proposition
   ~\ref{prop:somewhatintermediate}, there exist $\pt{x} \in \lenv
   \cone \J$ and $\pt{y} \in \cone \K$ such that $\pt{w} = \pt{x}
   + \pt{y}$.  Thus, $\pi(\pt{w}) = \pi(\pt{x}) + \pi(\pt{y})$.
   Now, $\pi(\pt{w})$ is an integer lattice point in $\lin(\J \cup
   \K)$, while $\pi(\pt{x}) \in \lin \J$ and $\pi(\pt{y}) \in \lin
   \K$.  Since $(\lin \J)_{\Z}$ and $(\lin \K)_{\Z}$ are
   complementary sublattices of $\Z^{n}$, it follows that
   $\pi(\pt{x}) \in \Z^{n}$.  Furthermore, $\pt{x} \in \cone \J$
   and $\pt{0} \in \J$, so there exists an integer $\lambda$ such
   that $\pi(\pt{x})+\lambda\pt{e}_{n+1} \in (\cone \J)_{\Z}$.
   Therefore, $\x = \epsilon_\J(\pi(\pt{x})+\lambda\pt{e}_{n+1})
   \in \llenv \cone \J$.
\end{proof}

\begin{rem}
   If $\J \oplus \K$ is not a free sum, then Proposition
   \ref{prop:LatticeFreeSumIntoDisjointBoundaryandCone} does not
   hold.  For example, let $\J \subset \R^{2}$ be the segment
   $[(-1,0), (1,0)]$, and let $\K \subset \R^{2}$ be the segment
   $[(-1,-2), (1,2)]$.  Note that $(\lin \J)_{\Z}$ and $(\lin
   \K)_{\Z}$ are not complementary sublattices in $\Z^{2}$, so $\J
   \oplus \K$ is not a free sum.  The equation in Proposition
   \ref{prop:LatticeFreeSumIntoDisjointBoundaryandCone} fails to
   hold in this case because, for example, the lattice point
   $(1,1,1)$ appears in $\cone(\J\oplus \K)_{\Z}$ but not in
   $\bigsetsum_{\pt{x} \in \llenv \cone \J} (\pt{x} + \cone
   \K)_{\Z}$.
\end{rem}

\section{Sufficient conditions for the multivariate Braun equation}
\label{sec:BraunFormula}

The multivariate Braun equation
\eqref{eq:MultivariateBraunsEquation} does not hold for all free
sums $\J \oplus \K$ of convex sets.  In this section, we give
conditions on $\J$ and $\K$ that suffice to imply equation
\eqref{eq:MultivariateBraunsEquation}.  The conditions we give
generalize those originally given by Braun in
~\cite{braunreflexive}.  In the next section, we will show that,
conversely, our conditions are necessary in the case where $\J$
and $\K$ are rational polytopes.

To apply
Proposition~\ref{prop:LatticeFreeSumIntoDisjointBoundaryandCone},
we need to get our hands on the set $\llenv \cone \J$.  The next
proposition considers the case where all the elements of this set
are integer lattice points.

\begin{prop}\label{settogeneratingfctprop}
   Let $\J \subset \R^{n}$ be a compact convex set containing the
   origin.  Then the following conditions are equivalent:
   \begin{enumerate}[{\rm (a)}]
      \item  
      $\llenv \cone \J = (\lenv \cone \J)_{\Z}$,
   
      \item  
      $(\lenv \cone \J)_{\Z} = (\cone \J)_{\Z} \setminus (\cone \J
      + \enp)_{\Z}$,
   
      \item  
      $\sigma_{\lenv \cone \J}(\mul{z}) = (1 - z_{n+1}) \,
      \sigma_{\cone \J}(\mul{z})$.
   \end{enumerate}
\end{prop}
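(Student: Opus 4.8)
The plan is to establish the two equivalences (a)$\iff$(b) and (b)$\iff$(c): the first by a geometric analysis of the vertical fibers of $\cone\J$, and the second by a formal manipulation of the generating functions. The whole argument rests on understanding the map $\epsilon_{\J}$ fiber by fiber.

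First I would set up the fiber picture. Since $\pt{0}\in\J$, the vector $\enp$ lies in $\cone\J$, and because $\cone\J$ is a convex cone it is invariant under translation by $\enp$; thus $\cone\J$ meets each vertical line $\pi^{-1}(\pt{v})$, for $\pt{v}\in\pi(\cone\J)$, in a ray that is bounded below. The map $\epsilon_{\J}$ collapses this ray to its lowest point, its \emph{foot}. In particular, for a lattice point $\pt{m}\in(\cone\J)_{\Z}$ one has $\epsilon_{\J}(\pt{m})=\pt{m}$ exactly when $\pt{m}$ is the foot of its fiber, which happens exactly when the foot is a lattice point; and $\pt{m}$ is the lowest lattice point in its fiber exactly when $\pt{m}-\enp\notin\cone\J$, i.e.\ when $\pt{m}\notin(\cone\J+\enp)_{\Z}$.

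For (a)$\iff$(b) I would identify the three sets in terms of this picture. Every lattice point over a given $\pt{v}\in\Z^{n}\cap\pi(\cone\J)$ has the same foot, and each such $\pt{v}$ carries a lattice point, so $\llenv\cone\J=\epsilon_{\J}((\cone\J)_{\Z})$ is precisely the set of feet over $\Z^{n}\cap\pi(\cone\J)$. The set $(\lenv\cone\J)_{\Z}$ is the subset of those feet that happen to be lattice points, and the inclusion $(\lenv\cone\J)_{\Z}\subseteq\llenv\cone\J$ always holds, since a foot that is a lattice point is its own $\epsilon_{\J}$-image. Meanwhile $(\cone\J)_{\Z}\setminus(\cone\J+\enp)_{\Z}$ is exactly the set of lowest lattice points, one over each $\pt{v}$. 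Hence (a) asserts that every foot over $\Z^{n}\cap\pi(\cone\J)$ is a lattice point, while (b) asserts that the lowest lattice point over each such $\pt{v}$ coincides with the foot; these are the same condition, giving (a)$\iff$(b).

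For (b)$\iff$(c) I would argue formally. From $\enp\in\cone\J$ and convexity of the cone we get $\cone\J+\enp\subseteq\cone\J$, and translation by the lattice vector $\enp$ gives $(\cone\J+\enp)_{\Z}=(\cone\J)_{\Z}+\enp$; hence $z_{n+1}\,\sigma_{\cone\J}(\mul{z})=\sigma_{\cone\J+\enp}(\mul{z})$ is the generating function of a \emph{subset} of $(\cone\J)_{\Z}$. Subtracting, $(1-z_{n+1})\,\sigma_{\cone\J}(\mul{z})$ is the $\{0,1\}$-coefficient generating function supported on $(\cone\J)_{\Z}\setminus(\cone\J+\enp)_{\Z}$, whereas $\sigma_{\lenv\cone\J}(\mul{z})$ is supported on $(\lenv\cone\J)_{\Z}$. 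Since a formal Laurent series with coefficients in $\{0,1\}$ is determined by its support, (c) holds if and only if these two sets agree, which is exactly (b). The main obstacle is the fiber analysis behind (a)$\iff$(b) — pinning down that $\epsilon_{\J}$ collapses each fiber to its foot and that $\llenv\cone\J$ is precisely the set of feet over $\Z^{n}\cap\pi(\cone\J)$; once that geometry is secured, (a)$\iff$(b) is bookkeeping and (b)$\iff$(c) is a routine series identity made legitimate by the nesting $\cone\J+\enp\subseteq\cone\J$.
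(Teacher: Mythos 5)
Your proposal is correct and follows essentially the same route as the paper: the paper's key step is the observation that $\epsilon_{\J}$ gives a bijection (with inverse $(\a,\lambda)\mapsto(\a,\ceil{\lambda})$) between the non-lower-envelope points of $(\cone\J)_{\Z}\setminus(\cone\J+\enp)_{\Z}$ and the non-lattice points of $\llenv\cone\J$, which is precisely your fiber-by-fiber comparison of the foot of each vertical ray with the lowest lattice point above it. Your more explicit justification of (b)$\iff$(c) via the nesting $\cone\J+\enp\subseteq\cone\J$ and the $\{0,1\}$-coefficient argument fills in a detail the paper leaves implicit, but it is the same argument.
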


\begin{proof}
   We start by proving that (a) and (b) are equivalent.  First,
   note that the set containments
   \[
      \llenv \cone \J \supseteq (\lenv \cone \J)_{\Z}
   \]
   and
   \[
      (\lenv \cone \J)_{\Z} \subseteq (\cone \J)_{\Z} \setminus
      (\cone \J + \enp)_{\Z} 
   \]
   always hold.  To see that the respective converse containments
   are equivalent, observe that $\x \mapsto \epsilon_{\J}(\x)$ is
   a bijection between non-lower-envelope points in $(\cone
   \J)_{\Z} \setminus (\cone \J + \enp)_{\Z}$ and non-lattice
   points in $\llenv \cone \J$, with inverse bijection $(\a,
   \lambda) \mapsto (\a, \ceil{\lambda})$.  Thus, if either
   containment above is an equality, then so too is the other.
   
   Finally, the left- (resp.\ right-) hand side of (c) lists the
   points of the left- (resp.\ right-) hand side of (b) in
   generating-function form, so (b) and (c) are equivalent.
\end{proof}

Theorem ~\ref{thm:MultivariateBraunsFormula} is now an easy
corollary of the previous proposition.

\begin{proof}[Proof of Theorem ~\ref{thm:MultivariateBraunsFormula}
   (stated on p.~\pageref{thm:MultivariateBraunsFormula})]
   \label{proof:MultivariateBraunsFormula}
   Since $\llenv \cone \J = (\lenv \cone \J)_{\Z},$ the
   set-the\-o\-re\-tic equation in Proposition
   ~\ref{prop:LatticeFreeSumIntoDisjointBoundaryandCone} can be
   restated in terms of generating functions as follows:
   \[\sigma_{\cone (\J \oplus \K)}(\mul{z}) = \sigma_{ \lenv \cone
   \J } (\mul{z}) \, \sigma_{ \cone \K } (\mul{z}) \, .
   \]
   The theorem now follows from the equivalence of (a) and (c) in
   Proposition ~\ref{settogeneratingfctprop}.
\end{proof}

%

The conditions in Proposition ~\ref{settogeneratingfctprop} take on
an especially nice form when the convex set $\J$ is a rational
polytope.  We now show that, in this case, these conditions are
equivalent to the condition that the polar dual of $\J$ is a
lattice polyhedron.  We recall the relevant definitions.

The \emph{\textup{(}polar\textup{)} dual} of a polytope $\P
\subset \R^n$ containing the origin is defined to be the
polyhedron
\begin{equation*}
  \P^\vee \deftobe \setof{\phi \in (\lin \P)^{\ast} \st
  \text{$\phi(\a) \le 1$ for all $\a \in \P$}},
\end{equation*}
where $V^{\ast}$ denotes the set of all real-valued linear
functionals on a vector space $V$.  (Note that we use $\P^\vee$ to
refer to the dual of $\P$ \emph{with respect to the linear span of
$\P$}.)  In general, $\P^{\vee}$ may be unbounded, but, if $\pt{0}
\in \intr{\P}$, then $\P^\vee$ is a polytope.  (Here we write
$\intr{S}$ for the interior of a set $S$ relative to the subspace
topology on $\lin S$.)  Let $\phi_{1},\dotsc, \phi_{k},
\psi_{1},\dotsc,\psi_{\ell} \in (\lin \P)^{\ast}$ be linear
functionals such that
\begin{equation*}
   \P = \setof{\a \in \lin \P \st \text{$\phi_{1}(\a), \dotsc,
   \phi_{k}(\a) \le 1$ and $\psi_{1}(\a), \dotsc, \psi_{\ell}(\a)
   \le 0$}}.
\end{equation*}
Then $\dual \P$ can be expressed as the Minkowski sum of a
polytope and a polyhedral cone in the dual space $(\lin
\P)^{\ast}$ as follows:
\begin{equation*}
   \dual \P = \conv\setof{\phi_{1}, \dotsc, \phi_{k}} + 
   \pos\setof{\psi_{1}, \dotsc, \psi_{\ell}},
\end{equation*}
where $\pos S$ denotes the \emph{positive hull} $\setof{\lambda \a
\st \text{$\a \in \conv S$ and $\lambda \ge 0$}}$ of a set $S$.
We call $\P^\vee$ a \emph{lattice polyhedron} if its vertices are
in the \emph{dual integer lattice} defined by
\[
   (\lin \P)_{\Z}^{\ast} \deftobe \setof{\phi \in (\lin \P)^{\ast}
   \st \phi(\a) \in \Z \text{ for all } \a \in (\lin \P)_\Z}.
\]
%
   A polytope $\P$ is \emph{reflexive} if both $\P$ and
   $\P^{\vee}$ are lattice polytopes.
Reflexive polytopes were introduced by Victor Batyrev to study
mirror symmetry in string theory ~\cite{batyrevdualpoly}.

Hibi ~\cite{hibidual} showed that a lattice polytope $\P$
containing the origin in its interior is reflexive if and only if
$\left( k\P \setminus (k-1)\P \right)_\Z = \left( \bnd (k\P)
\right)_\Z$ for all integers $k \ge 2$.  This latter condition, in
turn, is equivalent to $\llenv \cone \P = (\lenv \cone \P)_{\Z}$.
Hibi's proofs carry over with virtually no change if we merely
assume that $\P$ is rational and contains the origin (not
necessarily in its interior).  Hibi's arguments then show that
$\dual \P$ is a lattice polyhedron if and only if $\llenv \cone \P
= (\lenv \cone \P)_{\Z}$.  We include a proof of this equivalence
for completeness (Proposition ~\ref{ratflexiveprop} below).
Non-lattice rational polytopes with lattice duals have appeared,
e.g., in ~\cite{fisetkasprzyk}, which gives a rational analogue of
a theorem of Hibi on the Ehrhart series of reflexive polytopes
~\cite{hibidual}.

\begin{prop}\label{ratflexiveprop}
   Let $\P$ be a rational polytope with $\pt{0} \in \P$.  Then
   $\dual \P$ is a lattice polyhedron if and only if $\llenv \cone
   \P = (\lenv \cone \P)_{\Z}$.
\end{prop}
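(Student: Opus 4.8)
The plan is to compute the lower envelope of $\cone\P$ explicitly from a facet description of $\P$ and to reduce the stated equivalence to a statement about the support function of $\dual\P$. Write $V \deftobe \lin\P$ and fix functionals $\phi_{1}, \dotsc, \phi_{k}, \psi_{1}, \dotsc, \psi_{\ell} \in V^{\ast}$ with $\P = \setof{\a \in V \st \phi_{i}(\a) \le 1 \text{ and } \psi_{j}(\a) \le 0}$, so that $\dual\P = \conv\setof{\phi_{1}, \dotsc, \phi_{k}} + \pos\setof{\psi_{1}, \dotsc, \psi_{\ell}}$. First I would verify, using that $\P$ is compact and contains $\pt{0}$, that $\cone\P = \setof{(\x, t) \in V \times \R \st t \ge 0,\ \phi_{i}(\x) \le t,\ \psi_{j}(\x) \le 0}$ and that $\pi(\cone\P) = \pos\P = \setof{\x \in V \st \psi_{j}(\x) \le 0} \betodef \tau$. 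For $\x \in \tau$ the lowest point of $\cone\P$ over $\x$ is $(\x, h(\x))$, where $h(\x) \deftobe \max\setof{0, \phi_{1}(\x), \dotsc, \phi_{k}(\x)}$; since $\pt{0} \in \dual\P$, this height agrees on $\tau$ with the support function $h(\x) = \max\setof{\phi(\x) \st \phi \in \dual\P}$ of $\dual\P$.

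With this in hand, both sides of condition (a) become transparent. Every lattice point of $\cone\P$ projects under $\epsilon_{\P}$ to a point $(\x, h(\x))$ with $\x \in \tau \cap (\lin\P)_{\Z}$, so $\llenv\cone\P = \setof{(\x, h(\x)) \st \x \in \tau \cap (\lin\P)_{\Z}}$, whereas $(\lenv\cone\P)_{\Z}$ is the subset of these points for which $h(\x) \in \Z$. Hence the equation $\llenv\cone\P = (\lenv\cone\P)_{\Z}$ holds if and only if $h(\x) \in \Z$ for every $\x \in \tau \cap (\lin\P)_{\Z}$; that is, if and only if the support function of $\dual\P$ takes integer values at every lattice point of $\tau$. (This matches conditions (b) and (c) of Proposition~\ref{settogeneratingfctprop}, since the lattice points just described are exactly the lowest lattice points $(\x, \ceil{h(\x)})$ of $\cone\P$.)

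It remains to show that $h$ is integer-valued on $\tau \cap (\lin\P)_{\Z}$ exactly when $\dual\P$ is a lattice polyhedron. The implication ``$\dual\P$ a lattice polyhedron $\Rightarrow$ $h$ integral on lattice points'' is immediate: for $\x \in \tau$ the support function $h(\x)$ is attained at a vertex of $\dual\P$ (the recession directions $\psi_{j}$ contribute nonpositively there), so if all vertices lie in $(\lin\P)_{\Z}^{\ast}$ then $h(\x) \in \Z$ for $\x \in (\lin\P)_{\Z}$. For the converse I would argue one vertex at a time. Because $\P$ is compact with $\pt{0} \in \P$, the polyhedron $\dual\P$ is full-dimensional in $V^{\ast}$, so each vertex $\phi_{i}$ has a full-dimensional rational normal cone $N_{i} \deftobe \setof{\x \in V \st \phi(\x) \le \phi_{i}(\x)\text{ for all }\phi \in \dual\P}$, which is contained in $\tau$ and on which $h$ restricts to the single functional $\phi_{i}$.

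The key technical step---and the one I expect to be the main obstacle---is the lemma that the lattice points of a full-dimensional rational cone generate the ambient lattice as a group. Granting it, pick $\pt{q} \in (\lin\P)_{\Z}$ in the relative interior of $N_{i}$; then for any $\pt{v} \in (\lin\P)_{\Z}$ and all large integers $N$ one has $N\pt{q},\, N\pt{q} + \pt{v} \in N_{i} \cap (\lin\P)_{\Z}$, so $\pt{v} = (N\pt{q} + \pt{v}) - N\pt{q}$ exhibits $(\lin\P)_{\Z}$ as generated by $N_{i} \cap (\lin\P)_{\Z}$. Since $h = \phi_{i}$ is integer-valued on $N_{i} \cap (\lin\P)_{\Z}$ by hypothesis and $\phi_{i}$ is linear, it follows that $\phi_{i} \in (\lin\P)_{\Z}^{\ast}$. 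As this holds at every vertex, $\dual\P$ is a lattice polyhedron, completing the equivalence. Beyond the generation lemma, the only points needing care are the full-dimensionality of the vertex normal cones and the inclusion $N_{i} \subseteq \tau$; everything else is routine bookkeeping with the facet description.
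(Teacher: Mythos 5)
Your argument is correct and follows essentially the same route as the paper: the forward direction computes the lower-envelope height as the maximum of the vertex functionals of $\dual\P$ and reads off integrality, while the converse exploits the full-dimensionality of the rational cone on which a single vertex functional $\phi_i$ realizes the support function (your $N_i$ is the paper's $\pos F$). The only organizational difference is that the paper argues contrapositively, producing one witness lattice point via a fundamental-domain argument for the sublattice $\phi_i^{-1}(\Z)$, whereas you argue directly that the lattice points of that full-dimensional cone generate $(\lin \P)_{\Z}$; both hinge on the same underlying fact.
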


\begin{proof}
   Suppose that $\dual \P$ is a lattice polyhedron.  It is clear
   that $\llenv \cone \P \supseteq (\lenv \cone \P)_{\Z}$.  To
   prove the converse containment, let $\x \in \llenv \cone \P$ be
   given.  By definition of the lower lattice envelope, we have
   that $\pi(\x) \in \Z^{n}$.  Let $\phi_1, \phi_2, \dots, \phi_k$
   be the vertices of $\P^\vee$, and let $\lambda \deftobe
   \max\setof{\phi_{1}(\pi(\x)),\dotsc,\phi_{k}(\pi(\x))}$.  Then
   $\pi(\x) \in \lambda \P$ while $\pi(\x) \notin (\lambda -
   \epsilon) \P$ for $0 < \epsilon < \lambda$.  Thus, $\x =
   (\pi(\x), \lambda) \in \lenv \cone \P$.  Furthermore, since
   each $\phi_{i}$ is a dual integer lattice point, we have that
   $\lambda \in \Z$, which implies that $\x \in (\lenv \cone
   \P)_\Z$, proving the desired containment.
   
   Conversely, suppose that $\dual \P$ has a vertex $\phi_{j}
   \notin (\lin \P)_{\Z}^{*}$.  Let $\Lambda \subset (\lin
   \P)_{\Z}$ be the sublattice of $(\lin \P)_{\Z}$ on which
   $\phi_{j}$ evaluates as an integer.  Thus, $\Lambda$ is a
   full-rank proper sublattice of $(\lin \P)_{\Z}$.  Let $F$ be
   the facet of $\P$ supported by the hyperplane $\phi_{j} = 1$.
   Then there exists a lattice point $\a \in (\pos F)_{\Z}
   \setminus \Lambda$.  (This may be seen by observing that $\pos
   F$ is a full-dimensional cone containing some element of
   $\Lambda$ in its interior.  Hence, $\pos F$ contains some
   $\Lambda$-translate of a fundamental domain of $\Lambda$, which
   in turn contains elements of $\Z^{n} \setminus \Lambda$.%
   ) 
   We then have that $\phi_{j}(\a) \notin \Z$ but
   $(\a, \phi_{j}(\a)) \in \llenv \cone \P$, so that $\llenv \cone
   \P \not\subset (\lenv \cone \P)_{\Z}$.
\end{proof}

As a corollary of Propositions \ref{settogeneratingfctprop} and
\ref{ratflexiveprop}, we find that the multivariate Braun equation
\eqref{eq:MultivariateBraunsEquation} holds when one of the
summands is a rational polytope whose polar dual is a lattice
polyhedron.

\begin{cor}\label{rationalbraunmultivarcor}
   Let $\P \subset \R^n$ be a rational polytope such that $\dual
   \P$ is a lattice polyhedron, and let $\K \subset \R^{n}$ be a
   convex set such that $\P \oplus \K$ is a free sum.  Then
   \begin{equation*}
      \sigma_{\cone (\P \oplus \K)}(\mul{z}) = (1 - z_{n+1}) \,
      \sigma_{\cone \P}(\mul{z}) \, \sigma_{\cone \K}(\mul{z}) \,.
   \end{equation*}
\end{cor}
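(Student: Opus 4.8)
The plan is to recognize this as an immediate consequence of Theorem~\ref{thm:MultivariateBraunsFormula}, with $\P$ playing the role of the compact summand $\J$. The only hypothesis of that theorem not already handed to us by the statement is the lower-envelope condition $\llenv \cone \P = (\lenv \cone \P)_{\Z}$, and the bridge to it is Proposition~\ref{ratflexiveprop}: since $\dual \P$ is assumed to be a lattice polyhedron, that proposition converts the dual hypothesis directly into the envelope condition we need.

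First I would check that the remaining hypotheses of Theorem~\ref{thm:MultivariateBraunsFormula} are met. Being a rational polytope, $\P$ is a compact convex set, and by the definition of a free sum the summand $\P$ contains the origin; these facts, together with the assumption that $\P \oplus \K$ is a free sum, are exactly what the theorem requires of its compact summand. Proposition~\ref{ratflexiveprop} applies because $\pt{0} \in \P$, so from the hypothesis that $\dual \P$ is a lattice polyhedron we conclude $\llenv \cone \P = (\lenv \cone \P)_{\Z}$. With this last condition in hand, Theorem~\ref{thm:MultivariateBraunsFormula} (applied with $\J = \P$) yields the multivariate Braun equation $\sigma_{\cone(\P \oplus \K)}(\mul{z}) = (1 - z_{n+1})\, \sigma_{\cone \P}(\mul{z})\, \sigma_{\cone \K}(\mul{z})$, as desired.

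Equivalently, and matching the attribution in the text preceding the statement, one can assemble the result directly from Propositions~\ref{settogeneratingfctprop} and~\ref{ratflexiveprop}. Proposition~\ref{ratflexiveprop} gives condition~(a) of Proposition~\ref{settogeneratingfctprop}, whose equivalence with condition~(c) supplies the identity $\sigma_{\lenv \cone \P}(\mul{z}) = (1 - z_{n+1})\, \sigma_{\cone \P}(\mul{z})$; meanwhile condition~(a) is precisely the hypothesis under which Proposition~\ref{prop:LatticeFreeSumIntoDisjointBoundaryandCone} reads, in generating-function form, as $\sigma_{\cone(\P \oplus \K)}(\mul{z}) = \sigma_{\lenv \cone \P}(\mul{z})\, \sigma_{\cone \K}(\mul{z})$. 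Substituting the former into the latter gives the claimed formula.

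The one point that deserves a sentence of care is the $\SL_{n}(\Z)$ action folded into the definition of ``$\P \oplus \K$ is a free sum''. Since any element of $\SL_{n}(\Z)$ preserves $\Z^{n}$ and its dual lattice, it carries cones to cones, lower envelopes to lower envelopes, and lattice polyhedra to lattice polyhedra, and it likewise preserves all four generating functions appearing above. Hence we may assume without loss of generality that the free sum is already in standard position, with $\P$ and $\K$ meeting only at the origin and spanning orthogonal coordinate subspaces; this is the only real obstacle, and it is purely a matter of checking invariance rather than a substantive difficulty. Everything else is a direct citation of the two propositions and the theorem.
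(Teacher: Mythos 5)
Your proof is correct and is exactly the paper's argument: the corollary is obtained by feeding Proposition~\ref{ratflexiveprop} (which converts the lattice-polyhedron hypothesis on $\dual\P$ into $\llenv \cone \P = (\lenv \cone \P)_{\Z}$) into Theorem~\ref{thm:MultivariateBraunsFormula}, or equivalently by combining Propositions~\ref{settogeneratingfctprop} and~\ref{ratflexiveprop} with Proposition~\ref{prop:LatticeFreeSumIntoDisjointBoundaryandCone}, just as the text preceding the statement indicates. The extra remark on $\SL_{n}(\Z)$-invariance is a reasonable bit of diligence but not something the paper spells out.
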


By applying the specialization $\Ehr_{\P}(t) = \sigma_{\cone
\P}(1,\dotsc,1,t)$, we arrive at the following generalization of
Braun's Theorem ~\ref{Braunthm}.

\begin{cor}\label{braunthmgeneralizedcor}
   If $\P, \Q \subset \R^{n}$ are rational polytopes such that
   $\dual \P$ is a lattice polyhedron and $\P \oplus \Q$ is a free
   sum, then
   \begin{equation*}
      \Ehr_{\P \oplus \Q}(t) = (1-t)\Ehr_{\P}(t) \Ehr_{\Q}(t)
   \end{equation*}
   and hence
   \begin{equation*}
      \delta_{\P \oplus \Q}(t) =
      \frac{(1-t)(1-t^{\lcm(\den\P,\den\Q)})^{\dim\P + \dim \Q +
      1}}{(1-t^{\den \P})^{\dim\P + 1}(1 - t^{\den\Q})^{\dim\Q +
      1}}\,\delta_{\P}(t) \, \delta_{\Q}(t).
   \end{equation*}
   In particular, if $\P, \Q \subset \R^{n}$ are lattice polytopes
   such that $\dual \P$ is a lattice polyhedron and $\P \oplus \Q$
   is a free sum, then
   \begin{equation*}
      \delta_{\P \oplus \Q} = \delta_{\P}\, \delta_{\Q}
      \, .
   \end{equation*}
\end{cor}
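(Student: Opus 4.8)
The plan is to deduce both displayed equations from Corollary~\ref{rationalbraunmultivarcor} by specialization, treating the hypotheses of that corollary as already in force. Since $\Q$ is in particular a convex set and $\dual\P$ is a lattice polyhedron, Corollary~\ref{rationalbraunmultivarcor} applies with $\K = \Q$ and gives
\[
   \sigma_{\cone(\P\oplus\Q)}(\mul z) = (1 - z_{n+1})\,\sigma_{\cone\P}(\mul z)\,\sigma_{\cone\Q}(\mul z).
\]
Setting $z_{1} = \dotsb = z_{n} = 1$ and $z_{n+1} = t$ and invoking the identity $\Ehr_{\P}(t) = \sigma_{\cone\P}(1,\dotsc,1,t)$ turns this into the first displayed equation, $\Ehr_{\P\oplus\Q}(t) = (1-t)\Ehr_{\P}(t)\Ehr_{\Q}(t)$.

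To obtain the formula for $\delta_{\P\oplus\Q}$, I would substitute Ehrhart's theorem, writing each Ehrhart series as $\Ehr_{\P}(t) = \delta_{\P}(t)/(1-t^{\den\P})^{\dim\P+1}$ and likewise for $\Q$ and for $\P\oplus\Q$, and then solve for $\delta_{\P\oplus\Q}(t)$. This step rests on two geometric facts about the free sum, namely $\dim(\P\oplus\Q) = \dim\P + \dim\Q$ and $\den(\P\oplus\Q) = \lcm(\den\P,\den\Q)$. Granting these, clearing denominators in the first equation produces exactly
\[
   \delta_{\P\oplus\Q}(t) = \frac{(1-t)(1-t^{\lcm(\den\P,\den\Q)})^{\dim\P+\dim\Q+1}}{(1-t^{\den\P})^{\dim\P+1}(1-t^{\den\Q})^{\dim\Q+1}}\,\delta_{\P}(t)\,\delta_{\Q}(t),
\]
which is the second displayed equation.

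The two geometric facts are where the only real work lies, and since the $\SL_{n}(\Z)$ action preserves dimension, denominator, and lattice-point data, I may assume $\P\oplus\Q$ is an honest free sum. The dimension statement is then immediate, as $\lin(\P\oplus\Q) = \lin\P\oplus\lin\Q$ is an orthogonal direct sum in which each summand is full-dimensional in its span. For the denominator I would identify the vertices of $\P\oplus\Q$: writing each of its points as $a\pt{p} + b\pt{q}$ with $\pt{p}\in\P$, $\pt{q}\in\Q$, $a,b\ge0$, and $a+b=1$, one checks that a nonzero extreme point forces $a\in\{0,1\}$, using $\lin\P\cap\lin\Q = \{\pt{0}\}$; hence every vertex of $\P\oplus\Q$ other than possibly the origin is a vertex of $\P$ or of $\Q$. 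Conversely, extending a functional witnessing a vertex $\pt{v}\ne\pt{0}$ of $\P$ by zero on $\lin\Q$ shows that $\pt{v}$ remains a vertex of $\P\oplus\Q$. Thus the vertex set of $\P\oplus\Q$ is that of $\P$ and $\Q$ combined, up to the origin (a lattice point), and taking the least common multiple of the coordinate denominators over this set yields $\den(\P\oplus\Q) = \lcm(\den\P,\den\Q)$. I expect this vertex identification to be the main obstacle, though it is routine.

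Finally, the lattice case follows by specializing $\den\P = \den\Q = 1$, whence $\lcm(\den\P,\den\Q) = 1$ as well. The rational prefactor then collapses, since its net exponent of $(1-t)$ is $1 + (\dim\P+\dim\Q+1) - (\dim\P+1) - (\dim\Q+1) = 0$, leaving $\delta_{\P\oplus\Q} = \delta_{\P}\,\delta_{\Q}$.
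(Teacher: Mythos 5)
Your proposal is correct and follows essentially the same route as the paper: apply Corollary~\ref{rationalbraunmultivarcor} with $\K=\Q$, specialize via $\Ehr_{\P}(t)=\sigma_{\cone\P}(1,\dotsc,1,t)$, and then clear denominators using Ehrhart's theorem. The paper leaves the facts $\dim(\P\oplus\Q)=\dim\P+\dim\Q$ and $\den(\P\oplus\Q)=\lcm(\den\P,\den\Q)$ implicit, so your vertex-identification argument is a welcome (and correct) filling-in of detail rather than a departure.
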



\begin{rem}
   Corollary ~\ref{braunthmgeneralizedcor} recovers the following
   generalization of Theorem ~\ref{Braunthm}, due to
   Braun~\cite[Corollary 1]{braunreflexive}: Let $\P$ and $\Q$ be
   as in Theorem ~\ref{Braunthm}, and let $\P'$ (respectively,
   $\Q'$) be a lattice polytope equal to the intersection of $\P$
   (resp., $\Q$) with a finite collection of half-spaces in $\lin
   \P$ (resp., $\lin \Q$) bounded by hyperplanes passing through
   the origin.  Then $\delta_{\P' \oplus \Q'} = \delta_{\P'} \,
   \delta_{\Q'}$.
   
   There are lattice polytopes covered by our
   Corollary~\ref{braunthmgeneralizedcor} that do not satisfy the
   conditions of Braun's ~\cite[Corollary 1]{braunreflexive}.  For
   example, let $\P \subset \R^{2}$ be the polygon $\conv
   \setof{(-1, 0), (1, 0), (3, 1), (-3, 1)}$.  Then $\P$ is not
   contained in any reflexive polygon, but the dual of $\P$ is a
   lattice polyhedron.  (The polygon $\P$ is a $2$-dimensional
   analogue of a so-called \emph{top} polytope.  Top polytopes,
   like reflexive polytopes, originally arose in string theory~
   \cite{CanSka1997}.)
   
\end{rem}

\section{Necessary conditions for the multivariate Braun equation}
\label{NonBraunFormulaCase}

In this section, we prove Theorem ~\ref{thm:ConverseBraunFormula},
the converse of Theorem ~\ref{thm:MultivariateBraunsFormula} in
the case where the summands are rational polytopes.  That is, we
show that, if $\P$ and $\Q$ are rational polytopes containing the
origin such that
\begin{equation*}
   \sigma_{\cone (\P \oplus \Q)}(\mul{z}) = (1 - z_{n+1}) \,
   \sigma_{\cone \P}(\mul{z}) \, \sigma_{\cone \Q}(\mul{z}),
\end{equation*}
then either $\dual \P$ or $\dual \Q$ is a lattice polyhedron.  We
also prove Theorem~\ref{thm:ConverseBraunUnivariate}, the
univariate version of Theorem~\ref{thm:ConverseBraunFormula}.

Fix a rational polytope $\P \subset \R^{n}$ such that $\pt{0} \in
\P$, and let $\K \subset \R^{n}$ be a convex set such that
\mbox{$\P \oplus \K$} is a free sum.  As in the previous section,
we approach the gen\-er\-at\-ing function $\sigma_{\cone(\P \oplus
\K)}$ via the decomposition of $\cone(\P \oplus \K)_{\Z}$ given by
Proposition~\ref{prop:LatticeFreeSumIntoDisjointBoundaryandCone}.
The first step, therefore, is to find a useable description of the
lower lattice envelope $\llenv \cone \P$ in the case where we do
not necessarily have $\llenv \cone \P = (\lenv \cone \P)_{\Z}$.

Write $d(\P)$ for the denominator $\den(\dual{\P})$ of $\dual \P$.
For each nonnegative integer $i$, let
\begin{align*}
   \cone^{i} \P &\deftobe \cone \P + \tfrac{i}{d(\P)} \enp \, , \\
   \cone_{i} \K & \deftobe \cone \K - \tfrac{i}{d(\P)} \enp \, .
\end{align*}
(Observe that the definition of $\cone_{i}\K$ depends upon the
choice of $\P$, although this is not reflected in the notation.)
We similarly define the \emph{shifted lower envelopes}
$\smash{\lenv \cone^{i} \P \deftobe \lenv \cone \P +
\frac{i}{d(\P)} \enp}$ and $\smash{\lenv \cone_{i} \K \deftobe
\lenv \cone \K - \frac{i}{d(\P)} \enp}$ of these shifted cones.
The \emph{rind} of $\cone \P$ is $(\cone \P) \setminus (\cone \P +
\enp)$.

Proposition ~\ref{prop:RindAndStrata} below is a generalization of
Proposition ~\ref{settogeneratingfctprop} as applied to any
rational polytope containing the origin.  Before giving the formal
statement of Proposition \ref{prop:RindAndStrata}, we give an
informal summary.  See also Figure ~\ref{fig:RindAndStrata}.
\begin{itemize}
   \item 
   Each point in the lower lattice envelope is the result of
   taking a unique lattice point on some shifted lower envelope
   contained in the rind of $\cone \P$ and projecting that lattice
   point down to the lower envelope.
   
   \item 
   No lattice point lies between consecutive shifted lower
   envelopes.
   
   \item Hence, every lattice point in the rind lies on exactly
   one of the shifted lower envelopes.
\end{itemize}

\begin{prop}
\label{prop:RindAndStrata}
   Suppose that $\P \subset \R^{n}$ is a rational polytope with
   $\pt{0} \in \P$, and let $d(\P) \deftobe \den(\dual \P)$.
   Define the shifted cones $\cone^{i} \P$ for $0 \le i \le d(\P)$
   as above.  Then we have the following:
   \begin{enumerate}[{\rm (a)}]
      \item 
      $\llenv \cone \P = \bigsetsum_{i=0}^{d(\P) - 1}
      \parens{(\lenv \cone^{i} \P)_{\Z} - \tfrac{i}{d(\P)}\enp} ,$

      \item  
      $(\lenv \cone^{i} \P)_{\Z} = (\cone^{i} \P)_{\Z} \setminus
      (\cone^{i+1} \P)_{\Z}$ for $0 \le i \le d(\P) - 1 \, ,$
   
      \item
      $\sigma_{\lenv \cone^{i} \P} = \sigma_{\cone^{i} \P} -
      \sigma_{\cone^{i+1} \P}$ for $0 \le i \le d(\P) - 1 \, ,$
      
      \item 
      $(1 - z_{n+1})\,\sigma_{\cone \P}(\mul{z}) =
      \sum_{i=0}^{d(\P) - 1} \sigma_{\lenv \cone^{i}
      \P}(\mul{z})$.
   \end{enumerate}
\end{prop}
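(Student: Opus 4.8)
The plan is to prove part (b) first: it is the engine of the proposition, and once it is in hand parts (a), (c), and (d) follow by essentially formal manipulations. The one fact I would isolate at the outset is that, for a lattice point $\a\in(\lin\P)_\Z$, the height of the lower envelope of $\cone\P$ over $\a$ — the number $\ell(\a)\deftobe\max_j\phi_j(\a)$, where $\phi_1,\dots,\phi_k$ are the vertices of $\dual\P$ — lies in $\tfrac{1}{d(\P)}\Z$. This is immediate from $d(\P)=\den(\dual\P)$: each $d(\P)\phi_j$ is a dual lattice point, so every $\phi_j(\a)$, and hence their maximum, lies in $\tfrac{1}{d(\P)}\Z$. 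This fact, together with the shift size being exactly $\tfrac{1}{d(\P)}\le 1$, is what drives all four parts.

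For (b), fix $i$ with $0\le i\le d(\P)-1$; since $\pt{0}\in\P$ we have $\enp\in\cone\P$, so $\cone^{i+1}\P\subseteq\cone^i\P$ and the set difference is meaningful. A lattice point $(\a,s)$ lies in $(\cone^i\P)_\Z\setminus(\cone^{i+1}\P)_\Z$ exactly when $\ell(\a)+\tfrac{i}{d(\P)}\le s<\ell(\a)+\tfrac{i+1}{d(\P)}$, while it lies on $\lenv\cone^i\P$ exactly when $s=\ell(\a)+\tfrac{i}{d(\P)}$; the whole content of (b) is that these two conditions agree on lattice points. Writing $\ell(\a)+\tfrac{i}{d(\P)}=\tfrac{m}{d(\P)}$ with $m\in\Z$, the half-open interval $[\tfrac{m}{d(\P)},\tfrac{m+1}{d(\P)})$ has length $\tfrac{1}{d(\P)}\le 1$, and so contains an integer if and only if its left endpoint is already an integer, in which case that endpoint is the unique integer it contains. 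Hence an integer $s$ falls in the interval iff $\ell(\a)+\tfrac{i}{d(\P)}\in\Z$ and $s=\ell(\a)+\tfrac{i}{d(\P)}$, which is exactly the lower-envelope condition. This interval/pigeonhole step is the only real obstacle in the proposition, and it is precisely where the choice $d(\P)=\den(\dual\P)$ is forced: a coarser denominator would leave gaps between consecutive envelopes wide enough to contain stray lattice points.

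Statement (c) is then (b) read off as generating functions: the containment $(\cone^{i+1}\P)_\Z\subseteq(\cone^i\P)_\Z$ gives $\sigma_{\cone^i\P}-\sigma_{\cone^{i+1}\P}=\sigma_{(\cone^i\P)_\Z\setminus(\cone^{i+1}\P)_\Z}$ term by term, and by (b) the right-hand side is $\sigma_{\lenv\cone^i\P}$ (a generating function sees only lattice points). Summing (c) over $i=0,\dots,d(\P)-1$ telescopes to $\sigma_{\cone^0\P}-\sigma_{\cone^{d(\P)}\P}$; since $\cone^0\P=\cone\P$ and $\cone^{d(\P)}\P=\cone\P+\enp$, translating by the lattice vector $\enp$ gives $\sigma_{\cone^{d(\P)}\P}=\mul{z}^{\enp}\,\sigma_{\cone\P}=z_{n+1}\sigma_{\cone\P}$, so the sum is $(1-z_{n+1})\sigma_{\cone\P}$, which is (d).

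Finally, I would recover (a) from (b) by projecting each stratum to the lower envelope. Unioning (b) over $i$ identifies $\bigsetsum_{i=0}^{d(\P)-1}(\lenv\cone^i\P)_\Z$ with the lattice points of the rind $(\cone\P)_\Z\setminus(\cone\P+\enp)_\Z$. On the $i$-th stratum the projection $\epsilon_{\P}$ coincides with the vertical shift $\x\mapsto\x-\tfrac{i}{d(\P)}\enp$ (both send $(\a,\ell(\a)+\tfrac{i}{d(\P)})$ to $(\a,\ell(\a))$), so applying $\epsilon_{\P}$ converts this partition of the rind into $\bigsetsum_{i=0}^{d(\P)-1}\bigl((\lenv\cone^i\P)_\Z-\tfrac{i}{d(\P)}\enp\bigr)$. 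Since $\epsilon_{\P}$ carries the rind's lattice points bijectively onto $\llenv\cone\P=\epsilon_{\P}((\cone\P)_\Z)$ — each $\a$ in the domain having a unique rind representative — this disjoint union is exactly $\llenv\cone\P$, giving (a) and completing the plan.
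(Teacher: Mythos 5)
Your proof is correct and rests on the same key fact as the paper's --- that the vertices of $\dual\P$ take values in $\tfrac{1}{d(\P)}\Z$ on lattice points of $\lin\P$, so consecutive shifted lower envelopes are too close together to admit lattice points strictly between them. The only difference is organizational: you prove (b) first by a direct interval/pigeonhole argument and then deduce (a) by projecting the rind's strata down via $\epsilon_{\P}$, whereas the paper proves (a) directly and obtains (b) from it by contradiction; both routes are sound.
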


\begin{figure}[tbp]
   \centering
   \includegraphics{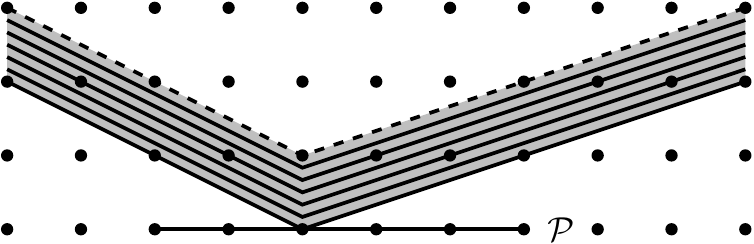}
   \caption{The shifted lower envelopes $\lenv \cone^{i} \P$, $0
   \le i \le 5$, where $\P = [-2,3] \subset \R^{1}$.  The shaded
   region is the rind of $\cone\P$.  Observe that every lattice
   point in the rind lies on one of the shifted lower envelopes
   shown.}
   %
   \label{fig:RindAndStrata}
\end{figure}

%
%

\begin{proof}
   The right-hand side of part (a) is contained in the left-hand
   side because elements of the right-hand side are points in
   $\lenv \cone \P$ that are directly beneath lattice points.  To
   see that the left-hand side of part (a) is contained in the
   right-hand side, let $\pt{x} \in \llenv \cone \P$ be given.  It
   suffices to show that $\x + \frac{i}{d(\P)}\enp \in \Z^{n+1}$
   for some $i \in \setof{0, \dotsc, d(\P) - 1}$.  Let
   \[
   \lambda
   \deftobe \max \setof{\phi(\pi(\pt{x})) \st \text{$\phi$ is a
   vertex of $\dual{\P}$}} ,
   \]
   and let $k \deftobe \ceil{\lambda}$.  Thus, $\pi(\x) \in
   \lambda\P$, but $\pi(\x) \notin (\lambda - \epsilon)\P$ for all
   $0 < \epsilon < \lambda$.  Hence, $(\pi(\x), \lambda) \in \lenv
   \cone \P$, so $\x = (\pi(\x), \lambda)$.  Now, every vertex
   $\phi$ of $\dual{\P}$ satisfies $\smash{\phi(\a) \in
   \frac{1}{d(\P)}\Z}$ for all $\a \in \Z^{n}$.  Since $\pi(\x)
   \in \Z^{n}$, we thus have that $\smash{k = \lambda +
   \frac{i}{d(\P)}}$ for some $i \in \setof{0, \dotsc, d(\P) -
   1}$.  Therefore, $\smash{\x + \frac{i}{d(\P)}\enp = (\pi(\x),
   k) \in \Z^{n+1}}$, as required.
   
   To see that the union in part (a) is disjoint, suppose that
   \begin{equation*}
      \x_{1} - \tfrac{i}{d(\P)}\enp = \x_{2} - \tfrac{j}{d(\P)} \enp
   \end{equation*}
   for some $\x_{1} \in (\lenv \cone^{i}\P)_{\Z}$ and $\x_{2} \in
   (\lenv \cone^{j}\P)_{\Z}$, where, without loss of generality,
   $0 \le i \le j < d(\P)$.  Then $\x_{2} - \x_{1} =
   \smash{
   \big(
   \frac{j}{d(\P)} - \frac{i}{d(\P)}
   \big)\, %
   \enp}$ is a lattice point and $\smash{0 \le \frac{j}{d(\P)} -
   \frac{i}{d(\P)} < 1}$.  This implies that $i = j$, showing
   disjointness and proving part~(a).
   
   To prove part (b), suppose that there is an element $\x$ on the
   right-hand side that is not on the left-hand side.  Then, for
   some integer $i$ such that $0 \le i \le d(\P) - 1$ and some
   $\lambda$ such that $\smash{\frac{i}{d(\P)} < \lambda <
   \frac{i+1}{d(\P)}}$, we have $\x - \lambda \enp \in \llenv
   \cone \P$.  Hence, by part (a), there exist $\pt{y} \in
   \Z^{n+1}$ and $j \in \setof{0, \dotsc, d(\P) - 1}$ such that
   $\smash{\pt{y} - \frac{j}{d(\P)}\enp = \x - \lambda \enp}$.
   This implies that $\smash{\lambda - \frac{j}{d(\P)}}$ is an
   integer, which is a contradiction.  This proves part (b).
   
   Part (c) follows immediately, since it is a restatement of part
   (b) in terms of generating functions.  Part (d) results from
   summing both sides of part (c) over all integers $i$ such that
   $0 \le i \le d(\P) - 1$.
\end{proof}


Using the previous proposition, we can write down a version of
Proposition \ref{prop:LatticeFreeSumIntoDisjointBoundaryandCone}
in which the sets in the disjoint union are indexed by lattice
points.  This allows us to translate the resulting set equality
directly into an equality of generating functions.

\begin{thm}
   \label{thm:NonBraunFormulaCase}
   Suppose that $\P \subset \R^{n}$ is a rational polytope and $\K
   \subset \R^{n}$ is a convex set such that $\P \oplus \K$ is a
   free sum.  Then
   \begin{equation*}
      \cone(\P \oplus \K)_{\Z} = \bigsetsum_{i = 0}^{d(\P)-1}
      \bigsetsum_{\pt{x} \in (\lenv \cone^{i} \P)_{\Z}} (\pt{x} +
      \cone_{i} \K)_{\Z} \, .
   \end{equation*}
   Therefore,
   \begin{align}
       \sigma_{\cone(\P \oplus \K)} 
        &= \sum_{i=0}^{d(\P) - 1}
        \sigma_{\lenv \cone^{i} \P} \, \sigma_{\cone_{i}\K} 
        \label{eq:NonBraunFormulaCaseLenvs} \\
        &= \sum_{i=0}^{d(\P)-1} (\sigma_{\cone^{i} \P} -
        \sigma_{\cone^{i+1} \P}) \, \sigma_{\cone_{i} \K} \, . 
        \label{eq:NonBraunFormulaCaseCones}
   \end{align}
\end{thm}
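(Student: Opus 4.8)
The plan is to combine the two set decompositions already in hand---Proposition~\ref{prop:LatticeFreeSumIntoDisjointBoundaryandCone} and Proposition~\ref{prop:RindAndStrata}(a)---and then read off the generating-function identities. Since $\P \oplus \K$ is a free sum we may assume $\pt{0} \in \P$, and $\P$ is compact because it is a polytope, so Proposition~\ref{prop:LatticeFreeSumIntoDisjointBoundaryandCone} applies with $\J = \P$ to give
\[
   \cone(\P \oplus \K)_{\Z} = \bigsetsum_{\pt{x} \in \llenv \cone \P} (\pt{x} + \cone \K)_{\Z} \, .
\]
First I would substitute the stratification of $\llenv \cone \P$ from Proposition~\ref{prop:RindAndStrata}(a), which writes each point of $\llenv \cone \P$ uniquely as $\pt{x}' - \tfrac{i}{d(\P)}\enp$ with $i \in \setof{0, \dotsc, d(\P)-1}$ and $\pt{x}' \in (\lenv \cone^{i} \P)_{\Z}$. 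For such a point, the definition $\cone_{i} \K = \cone \K - \tfrac{i}{d(\P)}\enp$ gives $\pt{x}' - \tfrac{i}{d(\P)}\enp + \cone \K = \pt{x}' + \cone_{i} \K$, so reindexing the disjoint union by the pairs $(i, \pt{x}')$ yields the asserted set identity. Both source decompositions are disjoint, so the reparametrized double union is again disjoint.

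Next I would pass to generating functions. Because the double union is disjoint, $\sigma_{\cone(\P \oplus \K)}$ is the sum over all pairs $(i, \pt{x}')$ of $\sigma_{\pt{x}' + \cone_{i} \K}$. The decisive point---and the reason for introducing the shifted cones---is that $\pt{x}'$ is now a \emph{lattice} point: translation by $\pt{x}'$ carries $(\cone_{i} \K)_{\Z}$ bijectively onto $(\pt{x}' + \cone_{i} \K)_{\Z}$, so $\sigma_{\pt{x}' + \cone_{i} \K} = \mul{z}^{\pt{x}'} \sigma_{\cone_{i} \K}$. Summing over $\pt{x}' \in (\lenv \cone^{i} \P)_{\Z}$ factors out $\sigma_{\cone_{i} \K}$ and collapses the inner sum to $\sigma_{\lenv \cone^{i} \P}$, producing equation~\eqref{eq:NonBraunFormulaCaseLenvs}. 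Substituting Proposition~\ref{prop:RindAndStrata}(c), namely $\sigma_{\lenv \cone^{i} \P} = \sigma_{\cone^{i} \P} - \sigma_{\cone^{i+1} \P}$, then gives equation~\eqref{eq:NonBraunFormulaCaseCones}.

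The one place requiring care---rather than a genuine obstacle---is exactly this lattice-point bookkeeping. In Proposition~\ref{prop:LatticeFreeSumIntoDisjointBoundaryandCone} the translation vectors $\pt{x}$ need not be lattice points, so the naive rule $\sigma_{\pt{x} + \cone \K} = \mul{z}^{\pt{x}} \sigma_{\cone \K}$ is unavailable. The stratification absorbs the fractional height $\tfrac{i}{d(\P)}$ into the cone factor $\cone_{i} \K$, leaving only the integer translations $\pt{x}'$, which is precisely what legitimizes the term-by-term passage to generating functions. Beyond this, the argument is routine reindexing; all the substantive content lives in the two propositions being combined.
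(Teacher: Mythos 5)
Your proposal is correct and follows essentially the same route as the paper: apply Proposition~\ref{prop:LatticeFreeSumIntoDisjointBoundaryandCone}, reindex the disjoint union via the stratification of Proposition~\ref{prop:RindAndStrata}(a), absorb the shift $-\tfrac{i}{d(\P)}\enp$ into $\cone_{i}\K$, and then pass to generating functions, using Proposition~\ref{prop:RindAndStrata}(c) for the second identity. Your added remark about why the lattice-point translations legitimize the term-by-term passage to generating functions is a point the paper leaves implicit, but it is the same argument.
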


\begin{proof}
   By Proposition
   ~\ref{prop:LatticeFreeSumIntoDisjointBoundaryandCone}, 
   \begin{equation*}
      \cone(\P \oplus \K)_{\Z} = \bigsetsum_{\pt{x} \in \llenv
      \cone \P} (\pt{x} + \cone \K)_{\Z} \, .
   \end{equation*}
   By Proposition ~\ref{prop:RindAndStrata}(a), this becomes
   \begin{align*}
      \cone(\P \oplus \K)_{\Z} &= \bigsetsum_{i=0}^{d(\P)-1}
      \bigsetsum_{\pt{x} \in (\lenv \cone^{i} \P)_{\Z}} (\pt{x} -
      \tfrac{i}{d(\P)} \enp + \cone \K)_{\Z} \\
       &= \bigsetsum_{i=0}^{d(\P)-1} \bigsetsum_{\pt{x} \in (\lenv
       \cone^{i} \P)_{\Z}} (\pt{x} + \cone_{i} \K)_{\Z} \, .
   \end{align*}
   Equation \eqref{eq:NonBraunFormulaCaseLenvs} is the restatement
   of this equality in terms of generating functions, and equation
   \eqref{eq:NonBraunFormulaCaseCones} follows from Proposition
   ~\ref{prop:RindAndStrata}(c).
\end{proof}

\begin{rem}
   Some of the terms in equation
   \eqref{eq:NonBraunFormulaCaseLenvs} may be zero.  For example,
   if $\P$ is the interval $[-2, 3]$, then $\sigma_{\lenv
   \cone^{1} \P} = \sigma_{\lenv \cone^{5} \P} = 0$.  (See Figure
   \ref{fig:RindAndStrata}.)  Nonetheless, if $d(\P) > 1$, then
   $\sigma_{\lenv \cone^{i} \P} \ne 0$ for some $i \in
   \setof{1,\dotsc,d(\P)-1}$ by Proposition ~\ref{ratflexiveprop}.
\end{rem}

Before proving Theorem ~\ref{thm:ConverseBraunFormula}, we need two
lemmas constraining when lattice points can appear in the shifted
lower envelopes of cones over compact convex sets.

\begin{lem}
   \label{lem:LowerEnvelopesAreSymmetric}
   Let $\J \subset \R^{n}$ be a compact convex set, and let $\rho$
   be a rational number.  Then $(\lenv \cone \J + \rho\,
   \enp)_{\Z} \ne \emptyset$ if and only if $(\lenv \cone \J -
   \rho \, \enp)_{\Z} \ne \emptyset$.
\end{lem}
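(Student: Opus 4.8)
The plan is to make the lower envelope completely explicit and then reduce the statement to an elementary symmetry of cyclic subgroups of $\mathbb{Q}/\Z$. Since $\J$ is compact, $\cone\J$ is closed, and for each $y$ in $\pi(\cone\J) = \pos\J$ the fibre $\pi^{-1}(y)\cap\cone\J$ is a (possibly degenerate) vertical segment; by definition $\epsilon_{\J}$ carries each point of $\cone\J$ to the bottom of its fibre. Hence $\lenv\cone\J$ is the graph $\setof{(y, h(y)) \st y \in \pos\J}$ of the height function $h(y) \deftobe \min\setof{t \st (y,t)\in\cone\J}$, which is positively homogeneous of degree one because $\cone\J$ is a cone. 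With this description in hand, $(\lenv\cone\J + \rho\enp)_{\Z} \ne \emptyset$ if and only if there is a point $m \in \Z^{n} \cap \pos\J$ with $h(m) + \rho \in \Z$, and likewise $(\lenv\cone\J - \rho\enp)_{\Z} \ne \emptyset$ if and only if some $m \in \Z^{n}\cap\pos\J$ satisfies $h(m) - \rho \in \Z$. So it suffices to show that $h(m) \equiv -\rho \pmod 1$ is solvable over $m \in \Z^{n}\cap\pos\J$ exactly when $h(m)\equiv\rho\pmod 1$ is.

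Next I would analyse $h$ one ray at a time. The apex $m = \pt{0}$ has $h(\pt{0}) = 0$ and contributes to either side precisely when $\rho \in \Z$, a condition symmetric in $\rho$. For $m \ne \pt{0}$, write $m = kv$ with $v$ the primitive lattice vector on the ray through $m$ and $k \in \Z_{\ge 1}$; homogeneity gives $h(m) = k\,h(v)$. If $h(v)$ is irrational then $k\,h(v) \pm \rho \notin \Z$ for every $k \ge 1$ (as $\rho$ is rational), so this ray contributes to neither side. If instead $h(v) = p/q$ in lowest terms, then as $k$ runs through $\Z_{\ge 1}$ the residues $kp \bmod q$ exhaust $\Z/q\Z$, so the values $h(m) \bmod 1$ attained along the ray are exactly the cyclic subgroup $\tfrac{1}{q}\Z/\Z$ of $\mathbb{Q}/\Z$.

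The symmetry is now immediate. Along the ray of $v$ the congruence $h(m)\equiv -\rho\pmod 1$ has a solution if and only if $-\rho \in \tfrac{1}{q}\Z$, i.e.\ if and only if $q\rho\in\Z$; and since $\tfrac{1}{q}\Z/\Z$ is closed under negation, $h(m)\equiv\rho\pmod 1$ is solvable under the very same condition $q\rho\in\Z$. Thus each ray (and the apex) supplies a lattice point to $(\lenv\cone\J + \rho\enp)_{\Z}$ exactly when it supplies one to $(\lenv\cone\J - \rho\enp)_{\Z}$, and taking the union over all rays and the apex yields the equivalence.

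The only delicate point I anticipate is the first step: verifying that $\lenv\cone\J$ really is a single-valued graph over $\pos\J$ and that membership of a lattice point in $(\lenv\cone\J\pm\rho\enp)_{\Z}$ is equivalent to the clean congruence $h(m)\pm\rho\in\Z$ (this is where compactness of $\J$, needed to keep $\cone\J$ closed, and the behaviour at the apex must be checked carefully). After that the argument is purely arithmetic---homogeneity of $h$, invertibility of $p$ modulo $q$, and the negation-symmetry of $\tfrac{1}{q}\Z/\Z$---and requires no further structural input about $\J$.
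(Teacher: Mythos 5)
Your proof is correct. It rests on the same structural observation as the paper's argument---namely that $\lenv \cone \J$ is a union of rays emanating from the apex, so the question can be settled ray by ray---but the mechanism is genuinely different. The paper argues by a geometric reflection: a ray originating at the rational point $\rho\,\enp$ contains a lattice point if and only if its point-inversion through $\rho\,\enp$ does, and that inversion carries $\lenv \cone \J + \rho\,\enp$ onto $-(\lenv \cone \J - \rho\,\enp)$, whose lattice points correspond to those of $\lenv \cone \J - \rho\,\enp$ because inversion through the origin preserves $\Z^{n+1}$. You instead make the height function $h$ on $\pos \J$ explicit, reduce the statement to the solvability of $h(\pt{m}) \equiv \mp\rho \pmod{1}$ over $\pt{m} \in \Z^{n} \cap \pos\J$, and extract the symmetry from the fact that the residues $k\,h(\pt{v}) \bmod 1$ attained along a primitive ray either miss $\mathbb{Q}/\Z$ entirely (when $h(\pt{v})$ is irrational) or form the subgroup $\tfrac{1}{q}\Z/\Z$, which is closed under negation. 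What your route buys is that it actually proves the one-line claim the paper leaves unjustified: the rational/irrational dichotomy for $h(\pt{v})$ is precisely why a ray from a rational point that contains a lattice point must have rational direction, and hence why its reflection also meets the lattice. Two minor remarks: the fibres $\pi^{-1}(y) \cap \cone\J$ can be upward rays rather than segments when $\pt{0} \in \J$, which is harmless since you only use the existence of the fibre minimum $h(y)$; and you should note in passing that the primitive vector $\pt{v}$ lies in $\pos\J$ (immediate, since $\pos\J$ is a cone containing $\pt{m} = k\pt{v}$), so that $h(\pt{v})$ is defined and homogeneity applies.
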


\begin{proof}
   Since $\rho \in \mathbb{Q}$, a ray $R$ originating at $\rho\,
   \enp$ contains a lattice point if and only if the inversion of
   $R$ through $\rho \, \enp$ also contains a lattice point.
   Hence, the set $\lenv \cone \J + \rho\, \enp$, which is a union
   of rays originating at $\rho\,\enp$, contains a lattice point
   if and only if its inversion through $\rho\, \enp$ contains a
   lattice point.  But $\lenv \cone \J - \rho \, \enp$ is just the
   inversion of this latter set through the origin.  That is,
   \begin{equation*}
      \lenv \cone \J - \rho\, \enp = -(-((\lenv \cone \J + \rho\,
      \enp) - \rho\, \enp) + \rho\, \enp).
   \end{equation*}
   Since inversion through the origin is a lattice-preserving
   operation, the claim follows.
%
\end{proof}


\begin{lem}\label{lem:LowerEnvelopesContainingLatticePointsHaveRationalVertices}
   Let $\Q$ be a rational polytope, and let $\rho$ be a real
   number.  If
   \begin{equation*}
      (\lenv \cone \Q + \rho\, \enp)_{\Z} \ne \emptyset,
   \end{equation*}
   then $\rho$ is a rational number.
\end{lem}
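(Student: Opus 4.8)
The plan is to translate everything into a statement about the rational polyhedral cone $\cone\Q$. Suppose $\pt{m} \in (\lenv\cone\Q + \rho\,\enp)_{\Z}$, so that $\pt{m} \in \Z^{n+1}$ and $\pt{y} \deftobe \pt{m} - \rho\,\enp \in \lenv\cone\Q$; I must deduce $\rho \in \mathbb{Q}$. Since $\Q$ is a rational polytope, $\cone\Q = \pos\setof{(\pt{q},1) \st \text{$\pt{q}$ a vertex of $\Q$}}$ is generated by finitely many rational rays, so by the Minkowski--Weyl theorem it has a finite description $\cone\Q = \setof{\x \in \R^{n+1} \st \inner{\pt{c}_{j}, \x} \ge 0 \text{ for } j = 1, \dotsc, N}$ with every $\pt{c}_{j} \in \mathbb{Q}^{n+1}$ (in the non-full-dimensional case I simply adjoin the two inequalities $\pm\pt{d}_{k}$ cutting out the rational subspace $\lin\cone\Q$). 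It will be convenient to note that the last coordinate of $\pt{c}_{j}$ is exactly $\inner{\pt{c}_{j}, \enp}$.

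\textbf{Key step.} The heart of the matter is to find a defining inequality that is active at $\pt{y}$ and "faces downward," i.e.\ an index $j$ with $\inner{\pt{c}_{j}, \pt{y}} = 0$ and $\inner{\pt{c}_{j}, \enp} > 0$. I would argue by contradiction: suppose that every index $j$ with $\inner{\pt{c}_{j}, \pt{y}} = 0$ satisfies $\inner{\pt{c}_{j}, \enp} \le 0$. For such $j$ and any $\lambda > 0$ we have $\inner{\pt{c}_{j}, \pt{y} - \lambda\,\enp} = -\lambda\,\inner{\pt{c}_{j}, \enp} \ge 0$, while for the finitely many $j$ with $\inner{\pt{c}_{j}, \pt{y}} > 0$ the quantity $\inner{\pt{c}_{j}, \pt{y} - \lambda\,\enp}$ stays positive once $\lambda$ is small enough. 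Hence $\pt{y} - \lambda\,\enp \in \cone\Q$ for all sufficiently small $\lambda > 0$, contradicting $\pt{y} \in \lenv\cone\Q$ (which means precisely that $\pt{y} - \lambda\,\enp \notin \cone\Q$ for every $\lambda > 0$). So a downward-facing active inequality exists.

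\textbf{Conclusion.} With such a $j$ fixed, I would finish immediately: from $\inner{\pt{c}_{j}, \pt{y}} = 0$ and $\pt{y} = \pt{m} - \rho\,\enp$ we get $\inner{\pt{c}_{j}, \pt{m}} - \rho\,\inner{\pt{c}_{j}, \enp} = 0$, so
\begin{equation*}
   \rho = \frac{\inner{\pt{c}_{j}, \pt{m}}}{\inner{\pt{c}_{j}, \enp}} \, .
\end{equation*}
The numerator is a rational combination of the integer coordinates of $\pt{m}$ and the denominator is a nonzero rational, so $\rho \in \mathbb{Q}$, as claimed.

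The step I expect to demand the most care is the middle one: a lower-envelope point may also lie on "vertical" facets, whose normals $\pt{c}_{j}$ have $\inner{\pt{c}_{j}, \enp} = 0$ and thus pin down nothing about $\rho$; the content of the argument is that at least one \emph{downward-facing} inequality must be active. The continuity/small-$\lambda$ bookkeeping above is what secures this, and it has the pleasant feature of working uniformly whether or not $\cone\Q$ is full-dimensional, since the equations of $\lin\cone\Q$ are themselves rational and are treated on the same footing as the other defining inequalities.
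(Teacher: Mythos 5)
Your proof is correct and follows essentially the same route as the paper's: both identify a rational supporting hyperplane of $\cone \Q$ at the lower-envelope point $\pt{m} - \rho\,\enp$ and use its rational defining equation, together with the integrality of $\pt{m}$, to solve for $\rho$. Your contradiction argument producing a \emph{downward-facing} active inequality makes explicit a step the paper leaves implicit (that the chosen supporting hyperplane is not vertical, so its translate meets the $\enp$-axis in a single, necessarily rational, point).
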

\begin{proof}
   Let $\x \in (\lenv \cone \Q + \rho\, \enp)_{\Z}$, and let $F$
   be a facet of $\cone \Q$ containing \mbox{$\x - \rho \, \enp$}.
   Then the supporting hyperplane $H$ of $\cone \Q$ at $F$ is a
   rational hyperplane containing $\rho\,\enp - \x$.  Therefore,
   the translation $H + \x$ by an integer lattice point must meet
   the $\enp$-axis at a rational point.
\end{proof}

We are now ready to prove Theorem~\ref{thm:ConverseBraunFormula}.

%

\begin{proof}[Proof of Theorem ~\ref{thm:ConverseBraunFormula}
(stated on p.\ \pageref{thm:ConverseBraunFormula})]
   \label{proof:ConverseBraunFormula} The ``if'' direction follows
   immediately from Corollary ~\ref{rationalbraunmultivarcor}.  To
   prove the converse, suppose that equation
   \eqref{eq:BraunsFormulaHolds} holds but that $\dual\P$ is not a
   lattice polyhedron.  Then, by Proposition
   ~\ref{ratflexiveprop}, $\llenv \cone \P \ne (\lenv \cone
   \P)_{\Z}$.  Hence, by Proposition ~\ref{prop:RindAndStrata}(a),
   there exists a maximum integer $j$ with $1 \le j \le d(\P) - 1$
   such that $(\lenv \cone^{j} \P)_{\Z} \ne \emptyset$.
   
   We claim that the nonemptiness of $(\lenv \cone^{j} \P)_{\Z}$,
   in combination with equation~\eqref{eq:BraunsFormulaHolds},
   implies that
   \begin{equation}
      \label{eq:3}
      (\cone_{j} \Q)_{\Z} \setminus (\cone \Q)_{\Z} = \emptyset.
   \end{equation}
   To see this, apply Proposition \ref{prop:RindAndStrata}(d) to
   rewrite equation~\eqref{eq:BraunsFormulaHolds} as follows:
   \begin{equation*}
      \sigma_{\cone(\P \oplus \Q)} = \sum_{i=0}^{d(\P) - 1}
      \sigma_{\lenv \cone^{i} \P} \, \sigma_{\cone \Q}.
   \end{equation*}
   Together with Theorem \ref{thm:NonBraunFormulaCase}, this 
   yields
   \begin{equation*}
       \sum_{i=0}^{d(\P) - 1} \sigma_{\lenv \cone^{i} \P} \,
       \sigma_{\cone_{i}\Q} = \sum_{i=0}^{d(\P) - 1} \sigma_{\lenv
       \cone^{i} \P} \, \sigma_{\cone \Q},
   \end{equation*}
   or, equivalently,
   \begin{equation*}
      \sum_{i=0}^{d(\P) -1} \sigma_{\lenv \cone^{i} \P} 
      (\sigma_{\cone_{i}\Q} - \sigma_{\cone \Q}) = 0.
   \end{equation*}
   Since $\cone \Q \subset \cone_{i} \Q$, the monomials on the
   left-hand side all have nonnegative coefficients.  In
   particular, since $(\lenv \cone^{j} \P)_{\Z} \ne \emptyset$, we
   must have that $\sigma_{\cone_{j}\Q} - \sigma_{\cone \Q} = 0$,
   proving equation \eqref{eq:3}.

   
   We now show that $j/d(\P) \ge \frac{1}{2}$.  The maximality of
   $j$ implies that
   \begin{equation*}
      \bigsetsum_{i=j+1}^{d(\P)-1} (\lenv \cone^{i}\P)_{\Z} =
      \emptyset,
   \end{equation*}
   which, by Lemma~\ref{lem:LowerEnvelopesAreSymmetric}, becomes
   \begin{equation*}
      \bigsetsum_{i=j+1}^{d(\P)-1} (\lenv \cone_{i}\P)_{\Z} =
      \emptyset.
   \end{equation*}
   Translating by $\enp$ and then reversing the order of the
   disjoint union yields
   \begin{equation*}
      \bigsetsum_{i=j+1}^{d(\P)-1} (\lenv \cone^{d(\P) -
      i}\P)_{\Z} \, = \, \bigsetsum_{i=1}^{d(\P) - j - 1}
      (\lenv \cone^{i}\P)_{\Z} = \emptyset.
   \end{equation*}
   Since $(\lenv \cone^{j} \P)_{\Z} \ne \emptyset$ and $j \ge 1$,
   we must have $j > d(\P) - j - 1$, or $j/d(\P) \ge \frac{1}{2}$,
   as claimed.
   
   We now apply similar reasoning to $\cone \Q$.  Equation
   \eqref{eq:3} implies that
   \begin{equation}
      \label{eq:4}
      \bigsetsum_{\substack{\rho \in \mathbb{Q} \st \\ 0 < \rho \le
      j/d(\P)}}(\lenv \cone \Q - \rho\, \enp)_{\Z} = \emptyset.
   \end{equation}
   Once again applying Lemma~\ref{lem:LowerEnvelopesAreSymmetric},
   we get
   \begin{equation}
      \label{eq:5}
      \bigsetsum_{\substack{\rho \in \mathbb{Q} \st \\ 0 < \rho \le
      j/d(\P)}}(\lenv \cone \Q + \rho\, \enp)_{\Z} = \emptyset,
   \end{equation}
   while, translating the sets in equation \eqref{eq:4} by $\enp$
   and then reversing the order of the disjoint union, we have
   \begin{equation}
      \label{eq:6}
      \bigsetsum_{\substack{\rho \in \mathbb{Q} \st \\ 0 < \rho
      \le j/d(\P)}}(\lenv \cone \Q + (1 -\rho) \enp)_{\Z} =
      \bigsetsum_{\substack{\rho \in \mathbb{Q} \st \\ 1-j/d(\P)
      \le \rho < 1}}(\lenv \cone \Q + \rho \, \enp)_{\Z} =
      \emptyset.
   \end{equation}
   Since $j/d(\P) \ge \frac{1}{2}$, we can combine equalities
   \eqref{eq:5} and \eqref{eq:6} to conclude that
   \begin{equation*}
      \bigsetsum_{\substack{\rho \in \mathbb{Q} \st \\ 0 < \rho <
      1}}(\lenv \cone \Q + \rho \, \enp)_{\Z} = \emptyset.
   \end{equation*}
   Hence, by
   Lemma~\ref{lem:LowerEnvelopesContainingLatticePointsHaveRationalVertices},
   \begin{equation*}
      (\cone \Q)_{\Z} \setminus (\cone \Q + \enp)_{\Z} = (\lenv
      \cone \Q)_{\Z}.
   \end{equation*}
   Thus, by Proposition~\ref{settogeneratingfctprop}, we have that
   $\llenv \cone \Q = (\lenv \cone \Q)_{\Z}$.  Therefore, by
   Proposition~\ref{ratflexiveprop}, $\dual \Q$ is a lattice
   polyhedron.
\end{proof}

It is now straightforward to prove
Theorem~\ref{thm:ConverseBraunUnivariate}, the univariate analogue
of Theorem~\ref{thm:ConverseBraunFormula}.

\begin{proof}[Proof of Theorem \ref{thm:ConverseBraunUnivariate}
(stated on p.\ \pageref{thm:ConverseBraunUnivariate})] %
   \label{proof:ConverseBraunUnivariate}
   Corollary \ref{braunthmgeneralizedcor} already established
   that, if either $\dual \P$ or $\dual \Q$ is a lattice
   polyhedron, then equations \eqref{eq:ConverseBraunEhrhart} and
   \eqref{eq:ConverseBraunDelta} hold.  To prove the converse
   suppose that neither $\dual \P$ nor $\dual \Q$ is a lattice
   polyhedron.  Then, by Theorem \ref{thm:ConverseBraunFormula},
   \begin{equation*}
      \sigma_{\cone(\P \oplus \Q)}(\mul{z}) \ne
      (1-z_{n+1})\,\sigma_{\cone
      \P}(\mul{z})\,\sigma_{\cone\Q}(\mul{z}).
   \end{equation*}
   By Theorem \ref{thm:NonBraunFormulaCase} and Proposition
   \ref{prop:RindAndStrata}(d), this becomes
   \begin{equation*}
      \sum_{i=0}^{d(\P) - 1} \sigma_{\lenv \cone^{i} \P} \,
      \sigma_{\cone_{i}\Q} \ne \sum_{i=0}^{d(\P) - 1}
      \sigma_{\lenv \cone^{i} \P} \, \sigma_{\cone \Q}.
   \end{equation*}
   Now, since $\cone \Q \subset \cone_{i} \Q$ for all $i$, every
   monomial on the right-hand side appears on the left-hand side.
   Thus, 
   \begin{equation}\label{eq:ConverseBraunDelta1}
      \sigma_{\cone(\P \oplus \Q)}(\mul{z}) =
      (1-z_{n+1})\,\sigma_{\cone
      \P}(\mul{z})\,\sigma_{\cone\Q}(\mul{z}) + \tau(\mul{z})
   \end{equation}
   for some nonzero Laurent series $\tau(\mul{z})$ with
   nonnegative coefficients.
   Hence, specializing equation \eqref{eq:ConverseBraunDelta1} at
   $\mul{z} = (1, \dotsc, 1, t)$ yields
   \begin{equation*}
      \Ehr_{\P \oplus \Q}(t) = (1 - t) \Ehr_{\P}(t) \Ehr_{\Q}(t) 
      + F(t),
   \end{equation*}
   where $F(t)$ is a nonzero power series.  In particular,
   equation \eqref{eq:ConverseBraunEhrhart} does not hold.
   Multiplying through by the denominator of the rational function
   $\Ehr_{\P \oplus \Q}(t)$ shows that equation
   \eqref{eq:ConverseBraunDelta} also does not hold.  (Equation
   \eqref{eq:ConverseBraunDeltaLattice} is just the case of
   equation \eqref{eq:ConverseBraunDelta} in which $\den(\P) =
   \den(\Q) = 1$.)
\end{proof}

\section{Sums of polytopes intersecting at rational points}
\label{sec:nonlatticepoints}

In previous sections, we considered the generating function
$\sigma_{\cone(\J \join \K)}$ where $\J \join \K$ was a
free sum.  In particular, $\J$ and $\K$ intersected only at the
origin.  Matters are essentially the same if $\J$ and $\K$
intersect at an arbitrary \emph{lattice} point $\p$ in $\Z^{n}$,
since we can reduce the computation of $\sigma_{\cone(\J \join
\K)}$ to the previous case via the equation
\begin{equation*}
   \sigma_{\cone(\J \join \K)}(\mul{z}) = \sigma_{\cone((\J - 
   \p)\oplus(\K-\p))}(z_{1},\dotsc,z_{n},\mul{z}^{\alpha(\p)}).
\end{equation*}
(Here, in accordance with the convention mentioned in Section
~\ref{sec:Introduction}, $\mul{z}^{\alpha(\p)}$ denotes the
monomial $z_{1}^{p_{1}}\dotsm z_{n}^{p_{n}}z^{\null}_{n+1}$, where
$\p = (p_{1},\dotsc,p_{n})$.)

We now turn to the case where $\J$ and $\K$ intersect in an
arbitrary rational point in $\mathbb{Q}^n$.  Our results in this
section generalize the propositions in Section
~\ref{sec:FreeSumDecompositions} and some of the results in Section
~\ref{sec:BraunFormula}.  We begin by extending our earlier
definitions of lower (lattice) envelopes to accommodate projections
that are not in the vertical direction.

Given $\p \in \mathbb{Q}^{n}$, define $\pi^\p \maps \R^{n+1} \to
\R^{n}$ via $\pi^\p(\pt{x})=\pi(\pt{x}-x_{n+1}\alpha(\p))$ where
$x_{n+1}$ is the last coordinate of $\pt{x}$.  Thus, instead of
projecting vertically down to $\R^{n}$ (as in previous sections),
$\pi^\p$ projects parallel to $\alpha(\p)$.  Note that
$\pi=\pi^{\pt{0}}$.  However, in general we may not have
$\pi^\p(\Z^{n+1}) = \Z^{n}$.

Given a closed linear cone $\C \subset \R^{n+1}$ not containing
$-\alpha(\p)$, define $\epsilon_\C^\p \maps \C \to \bnd \C$ via
\[\epsilon_\C^\p(\pt{x}) \deftobe \x - \max\setof{\lambda \in \R
\st \x - \lambda \alpha(\p) \in \C} \alpha(\p).\] Given a compact
convex set $\J\subset \R^{n}$, we will write $\epsilon^{\p}_{\J}$
as an abbreviation for $\epsilon^{\p}_{\cone \J}$.  We then define
the \emph{$\p$-lower envelope} $\plenv \C$ of $\C$ via
\begin{equation*}
   \plenv \C \deftobe \epsilon_\C^\p(\C) \, .
\end{equation*}
Similar to the lower envelope, the $\p$-lower envelope of $\C$ is
the set of points in $\C$ that are ``minimal in the direction of
$\alpha(\p)$''.  Finally, we introduce the notion of
\emph{$\p$-lower lattice envelope} of $\C$, defined as
\begin{equation*}
   \pllenv \C \deftobe \epsilon_\C^\p(\C_{\Z}) \, .
\end{equation*}
Thus the $\p$-lower lattice envelope is the projection of the
lattice points in $\C$ in the direction parallel to $\alpha(\p)$
onto the $\p$-lower envelope of $\C$.  The lower (lattice)
envelope of previous sections reappears as the special case $\p =
\pt{0}$.


We are now ready to state the generalizations of the propositions
from Section ~\ref{sec:FreeSumDecompositions}.

%

\begin{prop}\label{prop:somewhatintermediateaffine}
   Suppose that $\J, \K \subset \R^{n}$ are convex sets with $\J$
   compact.  Suppose in addition that the affine spans of $\J$ and
   $\K$ intersect in exactly one rational point $\p \in \K$.  Then
   \begin{equation}
      \label{eq:AffineFreeSumIntoDisjointBoundaryandCone}
      \cone(\J \join \K) = \bigsetsum_{\pt{x} \in \plenv \cone \J}
      (\pt{x} + \cone \K) \, .
   \end{equation}
   \label{prop:FreeSumIntoDisjointBoundaryandConeinpcase}
\end{prop}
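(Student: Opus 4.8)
The plan is to mimic the proof of Proposition~\ref{prop:FreeSumIntoDisjointBoundaryandCone} almost verbatim, substituting the $\p$-twisted objects $\pi^{\p}$, $\epsilon^{\p}_{\J}$, and $\plenv$ for their vertical counterparts, while keeping track of the fact that the common point $\p$ now lies in $\K$ (and hence $\alpha(\p) \in \cone\K$) rather than the origin. As before, the argument splits into two parts: showing the union on the right-hand side of \eqref{eq:AffineFreeSumIntoDisjointBoundaryandCone} is disjoint, and showing that this disjoint union equals $\bigcup_{\x \in \cone\J}(\x + \cone\K)$, which is just $\cone(\J \join \K)$ by \eqref{freesumminkowskyidentity}.

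For disjointness, suppose $\x_{1} + \pt{y}_{1} = \x_{2} + \pt{y}_{2}$ with $\x_{1}, \x_{2} \in \plenv\cone\J$ and $\pt{y}_{1}, \pt{y}_{2} \in \cone\K$. First I would apply $\pi^{\p}$ to both sides. The key linear-algebra observation is that $\pi^{\p}$ restricted to $\lin(\alpha(\J))$ is injective (since $\alpha(\p) \notin \lin(\alpha(\J))$, because $\J$ and $\K$ meet in a single point and $\p \in \K$), and that $\pi^{\p}(\cone\K)$ lies in $\lin\K$ while $\pi^{\p}(\cone\J)$ lies in $\lin\J$. Thus $\pi^{\p}(\x_{1}) - \pi^{\p}(\x_{2}) = \pi^{\p}(\pt{y}_{2}) - \pi^{\p}(\pt{y}_{1})$ lies in $\lin\J \cap \lin\K$. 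Here is the first place care is needed: the hypothesis is that the \emph{affine} spans of $\J$ and $\K$ meet in a single point, so I would translate to the common point $\p$ and argue that the relevant linear spans $\lin(\J - \p)$ and $\lin(\K - \p)$ intersect trivially. After establishing $\pi^{\p}(\x_{1}) = \pi^{\p}(\x_{2})$, I conclude $\x_{1} = \x_{2}$ from the fact that the fiber $(\pi^{\p})^{-1}(\pi^{\p}(\x_{1}))$ meets $\plenv\cone\J$ in exactly one point, which is the defining property of the $\p$-lower envelope as the $\alpha(\p)$-minimal points.

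For the set equality, the containment $\subseteq$ holds because $\J$ compact gives $\plenv\cone\J \subset \cone\J$. For $\supseteq$, suppose $\pt{w} \in \x + \cone\K$ for some $\x \in \cone\J$. Mirroring \eqref{eq:DropToEnvelope}, I would write $\pt{w} - (\x - \epsilon^{\p}_{\J}(\x)) \in \epsilon^{\p}_{\J}(\x) + \cone\K$, and then observe that $\x - \epsilon^{\p}_{\J}(\x)$ is a \emph{nonnegative} multiple of $\alpha(\p)$. The crucial replacement for ``$\pt{0} \in \K$ so $\enp \in \cone\K$'' is now ``$\p \in \K$ so $\alpha(\p) \in \cone\K$''; since $\cone\K$ is closed under adding nonnegative multiples of $\alpha(\p)$, adding $\x - \epsilon^{\p}_{\J}(\x)$ back in keeps us inside $\epsilon^{\p}_{\J}(\x) + \cone\K$, yielding $\pt{w} \in \epsilon^{\p}_{\J}(\x) + \cone\K$ with $\epsilon^{\p}_{\J}(\x) \in \plenv\cone\J$.

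The main obstacle I anticipate is the bookkeeping around the single-rational-point intersection of the affine spans and the non-vertical projection. Unlike the original proposition, $\pi^{\p}$ need not send $\Z^{n+1}$ onto $\Z^{n}$, and $\alpha(\p)$ is not a coordinate direction, so I must verify carefully that $\alpha(\p) \notin \lin(\alpha(\J))$ and that the $\max$ defining $\epsilon^{\p}_{\C}$ is attained and finite (guaranteed by $\C$ closed and $-\alpha(\p) \notin \C$, which follows from compactness of $\J$ and $\p \in \K$). Once these direction-of-$\alpha(\p)$ facts are in place, the rest of the argument is a direct transcription of the $\p = \pt{0}$ case, so I expect the proof to be short.
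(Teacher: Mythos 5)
Your proposal is correct and follows essentially the same route as the paper, which itself only remarks that the proof of Proposition~\ref{prop:FreeSumIntoDisjointBoundaryandCone} carries over verbatim once $\pi$, $\epsilon_{\J}$, and $\lenv$ are replaced by $\pi^{\p}$, $\epsilon^{\p}_{\J}$, and $\plenv$, and once one notes that $\pi^{\p}(\cone \J) \subset \lin(\J - \p)$ and $\pi^{\p}(\cone \K) \subset \lin(\K - \p)$, which intersect trivially --- exactly the translation-to-$\p$ step you identify. One small correction: your parenthetical claim that $\alpha(\p) \notin \lin(\alpha(\J))$ is false, since $\p$ lies in the affine span of $\J$ and hence $\alpha(\p) \in \alpha(\aff \J) \subset \lin(\alpha(\J))$, so $\pi^{\p}$ is \emph{not} injective on $\lin(\alpha(\J))$; but your argument never actually uses this, because the step $\x_{1} = \x_{2}$ rests, correctly, on the fact that each fiber of $\pi^{\p}$ (a line in direction $\alpha(\p)$) meets $\plenv \cone \J$ in at most one point.
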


Once we note that, for $\pt{x} \in \cone \J$, $\pi^\p(\pt{x})$ is
in $\lin (\J-\p)$, the proof of this proposition is the same as
the proof of Proposition
~\ref{prop:FreeSumIntoDisjointBoundaryandCone} with the
appropriate replacements (such as $\pi$ replaced by $\pi^\p$, and
$\epsilon_\J$ replaced by $\epsilon_\J^\p$).

We now seek a restriction of equation
\eqref{eq:AffineFreeSumIntoDisjointBoundaryandCone} to lattice
points that is in the spirit of Proposition
~\ref{prop:LatticeFreeSumIntoDisjointBoundaryandCone}.  To this
end, we define an analogue of the free-sum operation, which we
call an \emph{affine free sum}.  Recall that, for $\J \join \K$ to
be a free sum, we required that $(\lin \J)_{\Z}$ and $(\lin
\K)_{\Z}$ be complementary sublattices of $\Z^{n}$.  One
complication of our present case is that $\pi^\p(\pt{x})$ is not
necessarily a lattice point for every lattice point $\pt{x}$ in
$\cone \J$.  Thus, we consider the refinement $\Lambda^{\p}
\deftobe \pi^\p(\Z^{n+1})$ of $\Z^{n}$.  There are several
equivalent characterizations of this lattice:
\begin{enumerate}
   \item 
   $\Lambda^{\p} = \pi^{\p}(\Z^{n+1})$.
   
   \item 
   $\Lambda^\p$ is the lattice in $\R^n$ generated by
   $\setof{\pt{e}_{1}, \dotsc, \pt{e}_{n}, \p}$ under integer
   linear combinations.
   
   \item 
   $\Lambda^{\p} =\bigsetsum_{k=0}^{r-1} \parens{\Z^n-k\p}$, where
   $r \deftobe \den(\p)$ is the least common multiple of the
   denominators of the coordinates of $\p$.
\end{enumerate}
We adapt our earlier notation and terminology to work with the
lattice $\Lambda^\p$ as follows.  For a subset $S$ of $\R^n$, let
$S_{\Lambda^\p}$ denote the set of points in $S \cap \Lambda^\p$.
We say that two sublattices $\L, \M \subset \Lambda^\p$ are
\emph{complementary sublattices of $\Lambda^\p$} if each element
of $(\lin(\L \cup \M))_{\Lambda^\p}$ is the sum of a unique
element of $\L$ and a unique element of $\M$.



Given convex sets $\J$ and $\K$ in $\R^n$, we call $\J \join \K$
an \emph{affine free sum} if $\J$ and $\K$ intersect at a point
$\p \in \mathbb{Q}^n$ such that $(\lin(\J-\p))_{\Lambda^\p}$ and
$(\lin(\K-\p))_{\Lambda^\p}$ are complementary sublattices of
$\Lambda^\p$.  Equivalently, $\J \oplus \K$ is an affine free sum
if $\J$ and $\K$ intersect at a unique rational point and 
\begin{equation*}
   \lin(\cone(\J \oplus \K))_{\Z} = \lin(\cone \J)_{\Z} +
   \lin(\cone \K)_{\Z},
\end{equation*}
where the sum on the right is the Minkowski sum.

\begin{prop}
   Suppose that $\J, \K \subset \R^{n}$ are convex sets such that
   $\J$ is compact and $\J \join \K$ is an affine free sum of
   convex sets intersecting at $\p \in \mathbb{Q}^n$.
   Then
   \begin{equation*}
      \cone(\J \join \K)_{\Z} = \bigsetsum_{\pt{x} \in \pllenv
      \cone \J} (\pt{x} + \cone \K)_{\Z} \, .
   \end{equation*}
\label{prop:LatticeFreeSumIntoDisjointBoundaryandConeinpcase}
\end{prop}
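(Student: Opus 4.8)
The plan is to mimic the proof of Proposition~\ref{prop:LatticeFreeSumIntoDisjointBoundaryandCone}, making the substitutions forced by the affine setting: replace the vertical projection $\pi$ by $\pi^{\p}$, replace $\epsilon_{\J}$ by $\epsilon_{\J}^{\p}$, replace $\llenv \cone \J$ by $\pllenv \cone \J$, and replace the condition that $(\lin\J)_{\Z}$ and $(\lin\K)_{\Z}$ be complementary sublattices of $\Z^{n}$ by the condition that $(\lin(\J-\p))_{\Lambda^{\p}}$ and $(\lin(\K-\p))_{\Lambda^{\p}}$ be complementary sublattices of $\Lambda^{\p}$. The one genuinely new ingredient is that lattice points of $\cone(\J\join\K)$ now project under $\pi^{\p}$ into $\Lambda^{\p}$ rather than $\Z^{n}$, so the complementarity hypothesis must be invoked at the level of $\Lambda^{\p}$.

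First I would establish the easy containment: the right-hand side consists of lattice points lying in $\cone(\J\join\K)$ by Proposition~\ref{prop:somewhatintermediateaffine}, hence they lie in the left-hand side. Disjointness of the union on the right follows by the same argument as in Proposition~\ref{prop:somewhatintermediateaffine} (two representations $\x_1+\pt{y}_1=\x_2+\pt{y}_2$ force $\pi^{\p}(\x_1)=\pi^{\p}(\x_2)$ because their difference lies in $\lin(\J-\p)\cap\lin(\K-\p)=\setof{\pt{0}}$, and then the single-point fiber property of the $\p$-lower envelope gives $\x_1=\x_2$).

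For the reverse containment, which is the crux, I would take $\pt{w}\in\cone(\J\join\K)_{\Z}$ and use Proposition~\ref{prop:somewhatintermediateaffine} to write $\pt{w}=\x+\pt{y}$ with $\x\in\plenv\cone\J$ and $\pt{y}\in\cone\K$. Applying $\pi^{\p}$ gives $\pi^{\p}(\pt{w})=\pi^{\p}(\x)+\pi^{\p}(\pt{y})$. Here $\pi^{\p}(\pt{w})\in\Lambda^{\p}$ by the defining characterization $\Lambda^{\p}=\pi^{\p}(\Z^{n+1})$, while $\pi^{\p}(\x)\in\lin(\J-\p)$ and $\pi^{\p}(\pt{y})\in\lin(\K-\p)$. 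Because $(\lin(\J-\p))_{\Lambda^{\p}}$ and $(\lin(\K-\p))_{\Lambda^{\p}}$ are complementary sublattices of $\Lambda^{\p}$, the expression of $\pi^{\p}(\pt{w})$ as a sum is unique and forces $\pi^{\p}(\x)\in\Lambda^{\p}$. The remaining task is to upgrade this into membership of $\x$ in $\pllenv\cone\J$: since $\x\in\cone\J$ and $\pt{0}\in\J-\p$ (equivalently $\p\in\J$, so $\alpha(\p)\in\cone\J$), there is an integer $\lambda$ with $\pi^{\p}(\x)+\lambda\,\alpha(\p)\in(\cone\J)_{\Z}$; applying $\epsilon^{\p}_{\J}$ to this lattice point recovers $\x=\epsilon^{\p}_{\J}(\pi^{\p}(\x)+\lambda\,\alpha(\p))\in\pllenv\cone\J$, exactly paralleling the final line of Proposition~\ref{prop:LatticeFreeSumIntoDisjointBoundaryandCone}.

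The main obstacle I anticipate is verifying that $\pi^{\p}(\x)+\lambda\,\alpha(\p)$ can be made a genuine lattice point in $\cone\J$ for a suitable integer $\lambda$; this is where the affine case departs from the vertical one, since $\pi^{\p}(\x)$ lies in $\Lambda^{\p}$ rather than in $\Z^{n}$, and one must check that translating by integer multiples of $\alpha(\p)$ navigates within $\Z^{n+1}$ correctly. The key point is that $\Lambda^{\p}=\pi^{\p}(\Z^{n+1})$ guarantees a lattice preimage, and the compactness of $\J$ together with $\p\in\J$ guarantees that the fiber $\pi^{\p,-1}(\pi^{\p}(\x))$ meets $(\cone\J)_{\Z}$; the value $\epsilon^{\p}_{\J}$ of that lattice point then coincides with $\x$ because both lie on the same fiber line parallel to $\alpha(\p)$ and both are the $\p$-minimal point of $\cone\J$ on that line.
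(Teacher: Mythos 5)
Your proposal is correct and follows essentially the same route as the paper: the easy containment and disjointness via Proposition~\ref{prop:somewhatintermediateaffine}, then for the reverse containment decomposing $\pt{w}=\x+\pt{y}$, applying $\pi^{\p}$, invoking complementarity of $(\lin(\J-\p))_{\Lambda^{\p}}$ and $(\lin(\K-\p))_{\Lambda^{\p}}$ to get $\pi^{\p}(\x)\in\Lambda^{\p}$, and lifting to a lattice point of $\cone\J$ on the fiber whose image under $\epsilon^{\p}_{\J}$ is $\x$. The ``main obstacle'' you flag (finding an integer $\lambda$ with $(\pi^{\p}(\x),0)+\lambda\,\alpha(\p)\in(\cone\J)_{\Z}$) is exactly the step the paper's proof takes, and you resolve it the same way via $\Lambda^{\p}=\pi^{\p}(\Z^{n+1})$ and $\alpha(\p)\in\cone\J$.
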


\begin{proof}
   Elements on the right-hand side are integer lattice points that
   are contained in $\cone(\J \join \K)$ by
   Proposition ~\ref{prop:somewhatintermediateaffine}.  Hence, such elements are in the left-hand side.
   
   To prove the converse containment, let $\pt{w} \in \cone(\J
   \join \K)_{\Z}$ be given.  Then by the previous proposition,
   $\pt{w}=\pt{x}+\pt{y}$ where $\pt{x} \in \plenv \cone \J$ and
   $\pt{y} \in \cone \K$.  Thus, $\pi^\p(\pt{w}) = \pi^\p(\pt{x})
   + \pi^\p(\pt{y})$.  Now, $\pi^\p(\pt{w})$ is in $\lin((\J \cup
   \K)-\p)_{\Lambda^\p}$, while $\pi^\p(\pt{x}) \in \lin (\J-\p)$
   and $\pi^\p(\pt{y}) \in \lin (\K-\p)$.  Thus, the
   complementarity of $(\lin (\J-\p))_{\Lambda^\p}$ and $(\lin
   (\K-\p))_{\Lambda^\p}$ implies that $\pi^\p(\pt{x})$ is in
   $\Lambda^\p$.  Hence there exists a non-negative integer
   $\lambda$ such that $(\pi^\p(\pt{x}),0)+\lambda
   \alpha(\p)=(\pi^\p(\pt{x})+\lambda\p,\lambda)$ is an integer
   lattice point in $\cone \J$.  Since
   $\epsilon_\J^\p((\pi^\p(\pt{x})+\lambda\p,\lambda))=\pt{x}$, we
   have $\pt{x} \in \epsilon^\p_\J((\cone \J)_{\Z})$, and the
   result follows.
\end{proof}

We now turn to the rational generating function $\sigma_{\cone (\J
\join \K)}$ and state the generalizations of Proposition
~\ref{settogeneratingfctprop} and Theorem
~\ref{thm:MultivariateBraunsFormula}.

\begin{prop}\label{prop:settogeneratingfctinpcase}
   Fix a compact convex set $\J \subset \R^{n}$ containing
   $\pt{p}\in \mathbb{Q}^n$.  Let $r \deftobe \den(\p)$.  Then the
   following are equivalent:
   \begin{enumerate}[{\rm (a)}]
      \item  
      $\pllenv \cone \J = (\plenv \cone \J)_{\Z}$,
   
      \item  
      $(\plenv \cone \J)_{\Z} = (\cone \J)_{\Z} \setminus (\cone \J
      + r\alpha(\p))_{\Z}$,
   
      \item  
      $\sigma_{\plenv \cone \J}(\mul{z}) = (1 - \mul{z}^{r\alpha(\p)}) \,
      \sigma_{\cone \J}(\mul{z})$.
   \end{enumerate}
\end{prop}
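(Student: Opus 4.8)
The plan is to mirror the proof of Proposition~\ref{settogeneratingfctprop}, which is the special case $\p = \pt{0}$ (so that $r = 1$ and $r\alpha(\p) = \enp$), replacing the vertical projection $\epsilon_{\J}$ by the oblique projection $\epsilon^{\p}_{\J}$ and the translation by $\enp$ by translation by $r\alpha(\p)$. The key observation that makes the proof go through is that $r\alpha(\p) = (r\p, r)$ is itself a lattice point in $\Z^{n+1}$ (since $r = \den(\p)$ clears all denominators of the coordinates of $\p$), so translation by $r\alpha(\p)$ is a lattice-preserving operation and the set $(\cone \J + r\alpha(\p))_{\Z}$ is genuinely the image of $(\cone \J)_{\Z}$ under a lattice automorphism.

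First I would establish that (a) and (b) are equivalent. As in Proposition~\ref{settogeneratingfctprop}, the two containments $\pllenv \cone \J \supseteq (\plenv \cone \J)_{\Z}$ and $(\plenv \cone \J)_{\Z} \subseteq (\cone \J)_{\Z} \setminus (\cone \J + r\alpha(\p))_{\Z}$ hold automatically: the first because every lattice point of the $\p$-lower envelope is trivially its own $\epsilon^{\p}_{\J}$-image, and the second because a lattice point on the $\p$-lower envelope cannot have a lattice preimage one full step of $r\alpha(\p)$ below it still inside the cone. The content is that the two reverse containments are equivalent, and this follows from a bijection: the map $\x \mapsto \epsilon^{\p}_{\J}(\x)$ carries the lattice points of the rind $(\cone \J)_{\Z} \setminus (\cone \J + r\alpha(\p))_{\Z}$ that fail to lie on the $\p$-lower envelope to the non-lattice points of $\pllenv \cone \J$, with the inverse sending each such projected point back up by the appropriate multiple of $\tfrac{1}{r}\alpha(\p)$ to the next lattice point along the $\alpha(\p)$-direction. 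Because $r\alpha(\p)$ is a lattice vector, stepping up by $r\alpha(\p)$ returns a lattice point, so the fibers of $\pi^{\p}$ over $\Lambda^{\p}$ meet $\Z^{n+1}$ in an arithmetic progression of spacing $\tfrac{1}{r}\alpha(\p)$; this is exactly what makes the rounding map well-defined and bijective. Consequently one containment is an equality if and only if the other is.

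Finally, (b) and (c) are equivalent because (c) is simply the generating-function transcription of (b): the left-hand side $\sigma_{\plenv \cone \J}(\mul{z})$ enumerates $(\plenv \cone \J)_{\Z}$, while $(1 - \mul{z}^{r\alpha(\p)})\sigma_{\cone \J}(\mul{z})$ enumerates $(\cone \J)_{\Z} \setminus (\cone \J + r\alpha(\p))_{\Z}$ with signs, the subtraction $\mul{z}^{r\alpha(\p)}\sigma_{\cone \J}(\mul{z}) = \sigma_{(\cone \J + r\alpha(\p))_{\Z}}(\mul{z})$ recording precisely the lattice points removed by the set difference. The main obstacle, and the one place where the argument is not purely formal, is verifying that the rounding map in the (a)$\Leftrightarrow$(b) step is well-defined into the lattice $\Z^{n+1}$ rather than merely into $\Lambda^{\p}$; this is where the identity $r\alpha(\p) \in \Z^{n+1}$ and the characterization $\Lambda^{\p} = \pi^{\p}(\Z^{n+1})$ must be invoked carefully to guarantee that the lattice points of $\cone \J$ project onto points of $\pllenv \cone \J$ that are integer multiples of $\tfrac{1}{r}\alpha(\p)$ away from genuine elements of $\Z^{n+1}$.
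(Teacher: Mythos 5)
Your overall strategy is exactly the paper's: the same two automatic containments, the same bijection $\x \mapsto \epsilon^{\p}_{\J}(\x)$ between the non-envelope lattice points of $(\cone \J)_{\Z} \setminus (\cone \J + r\alpha(\p))_{\Z}$ and the non-lattice points of $\pllenv \cone \J$, and the same generating-function transcription for (b)$\Leftrightarrow$(c); the paper records the inverse as $\x \mapsto \x + \min\setof{\lambda \in \R \st \x + \lambda \alpha(\p) \in (\cone \J)_{\Z}}\,\alpha(\p)$, which is your ``step up to the next lattice point on the fiber.''

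However, the lattice-geometric justification you offer for the bijection is wrong as stated, in two places. First, the lattice points of $\Z^{n+1}$ on a fiber of $\pi^{\p}$ form an arithmetic progression of spacing $r\alpha(\p)$, not $\tfrac{1}{r}\alpha(\p)$: a difference of two lattice points equal to $\lambda\alpha(\p)$ must have integer last coordinate $\lambda$ and must satisfy $\lambda\p \in \Z^{n}$, which forces $r \mid \lambda$. Taken literally, spacing $\tfrac{1}{r}\alpha(\p)$ would destroy the very bijectivity you are arguing for, since a single fiber would then meet the rind $(\cone \J)_{\Z} \setminus (\cone \J + r\alpha(\p))_{\Z}$ in $r^{2}$ lattice points, all projecting to the same envelope point. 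Second, the points of $\pllenv \cone \J$ need \emph{not} sit an integer multiple of $\tfrac{1}{r}\alpha(\p)$ below a lattice point: for $\J = \conv\setof{(0,0),(1,0)} \subset \R^{2}$ and $\p = (\tfrac{1}{3},0)$, the lattice point $(1,0,2)$ projects to $(\tfrac{1}{2},0,\tfrac{1}{2}) = (1,0,2) - \tfrac{3}{2}\alpha(\p)$, and $\tfrac{3}{2} \notin \tfrac{1}{3}\Z$; the drop to the envelope is governed by the geometry of $\bnd \cone \J$, not by the lattice. Fortunately neither claim is needed. The correct spacing $r\alpha(\p)$ is precisely what yields injectivity (two rind lattice points on one fiber would differ by a positive multiple of $r\alpha(\p)$, placing the upper one in $\cone \J + r\alpha(\p)$), and surjectivity onto the non-lattice points of $\pllenv \cone \J$ follows by choosing, for each lattice point witnessing membership in $\pllenv \cone \J$, the lowest lattice point of $\cone \J$ on its fiber. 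With that repair your argument coincides with the paper's.
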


\begin{proof}
   We first show that (a) and (b) are equivalent.  By definition
   of the $\p$-lower envelope and $\p$-lower lattice envelope, we
   have $(\plenv \cone \J)_{\Z} \subset \pllenv \cone \J$ and
   $(\plenv \cone \J)_{\Z} \subset (\cone \J)_{\Z} \setminus
   (\cone \J+ r\alpha(\p))_{\Z}$.  As in the proof of Proposition
   ~\ref{settogeneratingfctprop}, we observe that $\x \mapsto
   \epsilon_{\J}^\p(\x)$ is a bijection between non-lower-envelope
   points in $(\cone \J)_{\Z} \setminus (\cone \J+
   r\alpha(\p))_{\Z}$ and non-lattice points in $\pllenv \cone
   \J$, with the inverse given by $\x \mapsto \x +
   \min\setof{\lambda \in \R \st \x + \lambda \alpha(\p) \in
   (\cone \J)_{\Z}} \alpha(\p)$.  The rest of the proof is the
   same as the proof of Proposition ~\ref{settogeneratingfctprop}.
\end{proof}

\begin{thm}
   \label{thm:MultivariateBraunsFormulainpcase}
   Suppose that $\J, \K \subset \R^{n}$ are convex sets such that
   $\J$ is compact and $\J \join \K$ is an affine free sum of
   convex sets intersecting at $\p \in \mathbb{Q}^n$.  Further
   suppose that $\pllenv \cone \J = (\plenv \cone \J)_{\Z}$.  Then
   \begin{equation}
      \label{eq:AffineBraunFormula}
      \sigma_{\cone (\J \join \K)}(\mul{z}) = ( 1 -
      \mul{z}^{r\alpha(\p)} ) \, \sigma_{\cone \J}(\mul{z}) \,
      \sigma_{\cone \K}(\mul{z}) \, ,
   \end{equation}
   where $r \deftobe \den(\p)$.
\end{thm}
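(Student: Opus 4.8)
The plan is to mirror exactly the proof of Theorem~\ref{thm:MultivariateBraunsFormula}, which was obtained as an immediate corollary of the set-theoretic decomposition in Proposition~\ref{prop:LatticeFreeSumIntoDisjointBoundaryandCone} together with the generating-function equivalence in Proposition~\ref{settogeneratingfctprop}. In the affine setting we now have the corresponding pair of tools available: Proposition~\ref{prop:LatticeFreeSumIntoDisjointBoundaryandConeinpcase} gives the lattice-point decomposition of $\cone(\J \join \K)_{\Z}$ indexed by $\pllenv \cone \J$, and Proposition~\ref{prop:settogeneratingfctinpcase} relates $\sigma_{\plenv \cone \J}$ to $\sigma_{\cone \J}$ under the hypothesis $\pllenv \cone \J = (\plenv \cone \J)_{\Z}$. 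So the proof is essentially a two-line assembly of these two ingredients.

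Concretely, first I would invoke Proposition~\ref{prop:LatticeFreeSumIntoDisjointBoundaryandConeinpcase}, whose hypotheses (namely $\J$ compact and $\J \join \K$ an affine free sum intersecting at $\p$) are exactly those assumed here. This gives the disjoint-union equation
\begin{equation*}
   \cone(\J \join \K)_{\Z} = \bigsetsum_{\pt{x} \in \pllenv
   \cone \J} (\pt{x} + \cone \K)_{\Z} \, .
\end{equation*}
Because the union is disjoint and each set $(\pt{x} + \cone \K)_{\Z}$ is simply the lattice points of a translate of $\cone \K$, I would translate this set equality directly into a product of generating functions. Using the assumption $\pllenv \cone \J = (\plenv \cone \J)_{\Z}$, the outer index set consists precisely of the lattice points of the $\p$-lower envelope, so summing $\mul{z}^{\pt{x}}$ over $\pt{x} \in (\plenv \cone \J)_{\Z}$ produces the factor $\sigma_{\plenv \cone \J}(\mul{z})$, while the inner sum over each fibre contributes $\sigma_{\cone \K}(\mul{z})$ (the translate shifts the exponent by $\pt{x}$, which factors out cleanly). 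This yields the intermediate identity
\begin{equation*}
   \sigma_{\cone(\J \join \K)}(\mul{z}) = \sigma_{\plenv \cone
   \J}(\mul{z}) \, \sigma_{\cone \K}(\mul{z}) \, .
\end{equation*}

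Finally, I would substitute the equivalence of conditions~(a) and~(c) in Proposition~\ref{prop:settogeneratingfctinpcase}: since $\pllenv \cone \J = (\plenv \cone \J)_{\Z}$ holds by hypothesis, condition~(c) gives $\sigma_{\plenv \cone \J}(\mul{z}) = (1 - \mul{z}^{r\alpha(\p)})\,\sigma_{\cone \J}(\mul{z})$ with $r = \den(\p)$. Inserting this into the intermediate identity immediately produces the claimed equation~\eqref{eq:AffineBraunFormula}. The one point that warrants care—and the only place where the affine case differs substantively from the vertical case—is the bookkeeping in passing from the disjoint set union to the product of series: one must confirm that the translation factor $\mul{z}^{\pt{x}}$ distributes correctly over the fibre sum and that disjointness guarantees no lattice point is counted twice, so that the total series genuinely factors as a product rather than merely bounding it. Since disjointness is already established in Proposition~\ref{prop:LatticeFreeSumIntoDisjointBoundaryandConeinpcase} and the translation-invariance of $\sigma_{\cone \K}$ is a formal property of Laurent monomials, this step presents no real obstacle; the entire argument is a faithful transcription of the $\p = \pt{0}$ proof with $\pi$, $\epsilon_\J$, and $(1 - z_{n+1})$ replaced throughout by $\pi^\p$, $\epsilon_\J^\p$, and $(1 - \mul{z}^{r\alpha(\p)})$.
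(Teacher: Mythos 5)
Your proposal is correct and is essentially identical to the paper's argument: the paper proves this theorem by noting it is the proof of Theorem~\ref{thm:MultivariateBraunsFormula} with the appropriate replacements, i.e., combining the disjoint decomposition of Proposition~\ref{prop:LatticeFreeSumIntoDisjointBoundaryandConeinpcase} (which, under the hypothesis $\pllenv \cone \J = (\plenv \cone \J)_{\Z}$, yields $\sigma_{\cone(\J \join \K)} = \sigma_{\plenv \cone \J}\,\sigma_{\cone \K}$) with the equivalence of (a) and (c) in Proposition~\ref{prop:settogeneratingfctinpcase}.
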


The proof is the same as the proof of Theorem
~\ref{thm:MultivariateBraunsFormula} with the appropriate
replacements.

\begin{rem}
   \label{rem:Bruns}
   It is straightforward to adapt the arguments in Section
   \ref{NonBraunFormulaCase} to prove a converse of Theorem
   \ref{thm:MultivariateBraunsFormulainpcase} analogous to Theorem
   \ref{thm:ConverseBraunFormula}.  That is, one can show that, if
   $\P \join \Q$ is an affine free sum of rational polytopes
   intersecting at $\p \in \mathbb{Q}^n$, and
   \begin{equation*}
      \sigma_{\cone (\P \join \Q)}(\mul{z}) = ( 1 -
      \mul{z}^{\den(\p) \alpha(\p)} ) \, \sigma_{\cone \P}(\mul{z}) \,
      \sigma_{\cone \Q}(\mul{z}) \, ,
   \end{equation*}
   then $\pllenv \cone \P = (\plenv \cone \P)_{\Z}$.  A recent
   preprint of W.~Bruns proves this result in the general context
   of arbitrary affine monoids~\cite{Bruns2013}.  As is the case
   with Theorem \ref{thm:MultivariateBraunsFormula}, whether the
   converse of Theorem \ref{thm:MultivariateBraunsFormulainpcase}
   holds for free sums $\J \oplus \K$ of arbitrary convex sets is
   still an open question.
\end{rem}

\begin{exm}
   \label{exm:AffineFreeSum1}
   Let $\J$ be the line segment from $(0,0)$ to $(1,0)$ in $\R^2$
   and let $\K$ be the line segment from $\parens{\frac{1}{2},-1}$
   to $\parens{\frac{1}{2},1}$ in $\R^2$.  Then $\J$ and $\K$
   intersect at $\p \deftobe \parens{\frac{1}{2},0}$, and $\J
   \join \K$ is an affine free sum.  The $\p$-lower envelope of
   $\cone \J$ is the boundary of the cone, and the set of lattice
   points in the boundary is precisely the set $(\cone \J)_{\Z}
   \setminus (\cone \J+ r\alpha(\p))_{\Z}$, where $r \deftobe
   \den(\p) = 2$.  Thus, $\J$ satisfies the conditions in
   Proposition ~\ref{prop:settogeneratingfctinpcase}.  Hence,
   Theorem ~\ref{thm:MultivariateBraunsFormulainpcase} applies,
   yielding
   \begin{align*}
      \sigma_{\cone (\J \join \K)}(z_1, z_2, z_3) 
       &= (1 - z_1 z_3^2) \, \sigma_{\cone \J}(z_1, z_2, z_3) \, \sigma_{\cone \K}(z_1, z_2, z_3) \, \\
       &= (1 - z_1 z_3^2) \, \frac{1}{(1 - z_{3})(1-z_{1}z_{3})} 
          \frac{1 + z_{1}z_{2}^{-1}z_{3}^{2} + z_{1}z_{3}^{2} + z_{1}z_{2}z_{3}^{2}}{(1 -
z_{1}z_{2}^{-2}z_{3}^{2})(1 - z_{1} z_{2}^{2}z_{3}^{2})}.
   \end{align*}
\end{exm}

\begin{exm}
   Theorem ~\ref{thm:MultivariateBraunsFormulainpcase} need not
   hold if we drop the condition that $\pllenv \cone \J = (\plenv
   \cone \J)_{\Z}$.  If we keep $\J$ the same set as in Example
   ~\ref{exm:AffineFreeSum1}, but let $\K$ be the line segment from
   $\parens{\frac{1}{3},-1}$ to $\parens{\frac{1}{3},1}$ in
   $\R^2$, then $\p=\parens{\frac{1}{3},0}$,
   $\alpha(\p)=\parens{\frac{1}{3},0,1}$ and $r=3$.  The
   $\p$-lower envelope of $\cone \J$ is still the boundary of the
   cone, but there are now lattice points in the set $(\cone
   \J)_{\Z} \setminus (\cone \J+ r\alpha(\p))_{\Z}$ that are not
   in the boundary of the cone.  Thus, the conditions in
   Proposition ~\ref{prop:settogeneratingfctinpcase} are not true
   of $\J$, and so we would need to use generalizations of results
   from Section ~\ref{NonBraunFormulaCase} to compute
   $\sigma_{\cone (\J \join \K)}(\mul{z})$.  We do not develop
   such generalizations here.
\end{exm}

We now consider an important class of polytopes for which the
conditions in Proposition ~\ref{prop:settogeneratingfctinpcase}
are true, so that Theorem
~\ref{thm:MultivariateBraunsFormulainpcase} applies when one of
the summands is a polytope from this class.  A lattice polytope
$\P$ is \emph{Gorenstein of index $k$} if there exists a lattice
point $\pt{m}$ such that $k\P-\pt{m}$ is a reflexive polytope.  In
particular, $\pt{m}$ is the unique lattice point in $k \intr{\P}$.
The recent paper ~\cite{Nill:2010fk} discusses Braun's formula in
the context of Gorenstein polytopes and nef-partitions.
   
\begin{prop}
   \label{prop:GorensteinLowerLatticePoints}
   Suppose that $\P\subset \R^n$ is a Gorenstein polytope of index
   $k$.  Let $\pt{m}$ be the unique lattice point in $\intr{k\P}$,
   and let $\p := \frac 1 k \pt{m}$.
   Then $\pllenv \cone \P = (\plenv \cone \P)_{\Z}$.
\end{prop}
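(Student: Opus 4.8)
The plan is to establish the nontrivial inclusion $\pllenv \cone \P \subseteq (\plenv \cone \P)_{\Z}$; the reverse inclusion $\pllenv \cone \P \supseteq (\plenv \cone \P)_{\Z}$ is immediate, since a lattice point already lying on the $\p$-lower envelope is fixed by $\epsilon^{\p}_{\P}$. Because $\epsilon^{\p}_{\P}(\x)$ always lies in $\plenv \cone \P$ by construction, it suffices to show that $\epsilon^{\p}_{\P}(\x) \in \Z^{n+1}$ for every lattice point $\x \in (\cone \P)_{\Z}$.

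First I would record a facet description of $\cone \P$ adapted to the Gorenstein structure. Set $\P' \deftobe k\P - \pt{m}$, which is reflexive by hypothesis, and let $\psi_{1}, \dotsc, \psi_{\ell}$ be the vertices of $\dual{(\P')}$. Since $\P'$ is reflexive, each $\psi_{i}$ is an integer functional and $\P' = \setof{w \st \psi_{i}(w) \le 1 \text{ for all } i}$. Translating and rescaling yields $\P = \setof{a \st \psi_{i}(a) \le (1 + \psi_{i}(\pt{m}))/k \text{ for all } i}$, so homogenizing gives $\cone \P = \setof{(y,t) \st t \ge 0 \text{ and } k\psi_{i}(y) \le (1 + \psi_{i}(\pt{m}))\,t \text{ for all } i}$, while $\cone \P' = \setof{(z,s) \st \psi_{i}(z) \le s \text{ for all } i}$.

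The arithmetic heart of the argument, which I expect to be the main obstacle, is the divisibility
\[
   k \mid 1 + \psi_{i}(\pt{m}) \qquad \text{for every } i.
\]
I would prove this by exploiting that $\P$ is a lattice polytope: the facet of $\P$ cut out by $\psi_{i}(a) = (1 + \psi_{i}(\pt{m}))/k$ contains a vertex $v$ of $\P$, which is a lattice point, and so $\psi_{i}(v) = (1 + \psi_{i}(\pt{m}))/k$ is forced to be an integer because $\psi_{i}$ takes integer values on $\Z^{n}$. This is exactly where the integrality of the vertices of $\P$ enters; for a merely rational $\P$ the divisibility can fail.

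To finish, I would introduce the linear isomorphism $T \maps \R^{n+1} \to \R^{n+1}$ given by $T(y,t) \deftobe (ky - t\pt{m},\, t)$. A direct computation shows $T(\cone \P) = \cone \P'$ and $T(\alpha(\p)) = \enp$, so $T$ carries projection along $\alpha(\p)$ to vertical projection, giving $\epsilon^{\p}_{\P} = T^{-1} \circ \epsilon_{\P'} \circ T$ on $\cone \P$. Since $T(\Z^{n+1}) \subseteq \Z^{n+1}$, for $\x = (y,t) \in (\cone \P)_{\Z}$ the point $T\x$ is a lattice point, so by reflexivity of $\P'$ (via Proposition~\ref{ratflexiveprop}) the projection $\epsilon_{\P'}(T\x)$ is again a lattice point, and the associated vertical drop is $\lambda^{*} = -\max_{i}\parens{k\psi_{i}(y) - (1 + \psi_{i}(\pt{m}))\,t}$. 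Using $T$ here is what guarantees that this drop is governed by the facets $\psi_{i}$ rather than by the constraint $t \ge 0$, since $\cone \P'$ is a pointed cone whose only facets are the $\psi_{i}(z) \le s$. By the displayed divisibility each expression $k\psi_{i}(y) - (1 + \psi_{i}(\pt{m}))\,t$ is a multiple of $k$, whence $k \mid \lambda^{*}$. Therefore $\lambda^{*}\alpha(\p) = \parens{\tfrac{\lambda^{*}}{k}\pt{m},\, \lambda^{*}} \in \Z^{n+1}$, and so $\epsilon^{\p}_{\P}(\x) = \x - \lambda^{*}\alpha(\p)$ is a lattice point, as required.
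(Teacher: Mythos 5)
Your proof is correct, but it takes a genuinely different route from the paper's. The paper's proof is a two-line argument: since $\p \in \intr{\P}$, the $\p$-lower envelope is $\bnd \cone \P$, and the Gorenstein property is cited (via Bruns--R\"omer) as equivalent to $\intr{(\cone \P)}_{\Z} = (\cone \P + k\alpha(\p))_{\Z}$, which is exactly condition (b) of Proposition~\ref{prop:settogeneratingfctinpcase} with $r = k$; the conclusion then follows from the equivalence (b)~$\Leftrightarrow$~(a). You instead verify condition (a) directly and self-containedly: you conjugate the skew projection $\epsilon^{\p}_{\P}$ to the vertical projection on $\cone(k\P - \pt{m})$ via the lattice-preserving map $T(y,t) = (ky - t\pt{m}, t)$, and reduce everything to the divisibility $k \mid 1 + \psi_{i}(\pt{m})$, which you correctly derive from the integrality of the vertices of $\P$. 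That divisibility is the real content, and your argument makes visible exactly where lattice-ness of $\P$ enters --- which is precisely the hypothesis the paper's closing example $\P = [\tfrac{1}{4},\tfrac{3}{4}]$ shows cannot be dropped. What the paper's approach buys is brevity by outsourcing the monoid-theoretic fact to the literature; what yours buys is a from-scratch proof of that fact in this setting. The only points to tidy are routine: if $\P$ is not full-dimensional you should extend each $\psi_{i}$ to an integer functional on all of $\Z^{n}$ (possible since $(\lin \P')_{\Z}$ is a primitive sublattice), and you should note that in $\cone \P'$ the constraint $s \ge 0$ is implied by the inequalities $\psi_{i}(z) \le s$ because the $\psi_{i}$ positively span, so the drop $\lambda^{*}$ is indeed governed only by the facet functionals as you assert.
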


\begin{proof}
   Since $\p \in \intr\P$, we have that $\plenv \cone \P = \bnd
   \cone \P$.  It is well known that the Gorenstein property
   implies that $\intr{(\cone \P)_{\Z}} = (\cone \P +
   k\alpha(\p))_{\Z}$ (see, \emph{e.g.}, ~\cite{BR2007}).  In
   particular, $k = \den(\p)$.  The result follows from
   Proposition \ref{prop:settogeneratingfctinpcase}(b).
\end{proof}

\begin{cor}\label{cor:GorensteinBraunCor}
   Suppose that $\P\subset \R^n$ is a Gorenstein polytope of index
   $k$.  Let $\pt{m}$ be the unique lattice point in $\intr{k\P}$,
   and let $\p := \frac 1 k \pt{m}$.  Let $\K \subset \R^{n}$ be a
   convex set containing $\p$ such that $\P \join \K$ is an affine
   free sum.
   Then
   \begin{equation*}
      \sigma_{\cone (\P \join \K)}(\mul{z}) = ( 1 -
      \mul{z}^{k\alpha(\p)}) \, \sigma_{\cone \P}(\mul{z}) \,
      \sigma_{\cone \K}(\mul{z}) \, .
   \end{equation*}
\end{cor}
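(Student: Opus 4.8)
The plan is to combine Proposition~\ref{prop:GorensteinLowerLatticePoints} with the affine analogue of the multivariate Braun equation, namely Theorem~\ref{thm:MultivariateBraunsFormulainpcase}. The Gorenstein hypothesis provides exactly the structural input that these earlier results require, so the corollary should follow almost immediately once the hypotheses are matched up.

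First I would verify that the setup falls under Theorem~\ref{thm:MultivariateBraunsFormulainpcase}. By hypothesis, $\P$ is a Gorenstein polytope of index $k$, $\pt{m}$ is the unique lattice point in $\intr{k\P}$, and $\p = \tfrac{1}{k}\pt{m}$. Proposition~\ref{prop:GorensteinLowerLatticePoints} then tells us two things: that $\pllenv \cone \P = (\plenv \cone \P)_{\Z}$, and (from its proof) that $k = \den(\p)$. Since $\K$ contains $\p$ and $\P \join \K$ is assumed to be an affine free sum, the point $\p \in \P \cap \K$ is the unique rational intersection point of the affine spans of $\J := \P$ and $\K$, so all the hypotheses of Theorem~\ref{thm:MultivariateBraunsFormulainpcase} are met with $\J = \P$.

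Next I would apply Theorem~\ref{thm:MultivariateBraunsFormulainpcase} directly. That theorem yields
\begin{equation*}
   \sigma_{\cone (\P \join \K)}(\mul{z}) = ( 1 -
   \mul{z}^{r\alpha(\p)} ) \, \sigma_{\cone \P}(\mul{z}) \,
   \sigma_{\cone \K}(\mul{z}),
\end{equation*}
where $r = \den(\p)$. The only remaining task is to identify the exponent: since Proposition~\ref{prop:GorensteinLowerLatticePoints} (and the standard fact $\intr{(\cone \P)_{\Z}} = (\cone \P + k\alpha(\p))_{\Z}$) gives $r = \den(\p) = k$, we may substitute $r = k$ to obtain the stated formula with factor $(1 - \mul{z}^{k\alpha(\p)})$.

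I expect no serious obstacle here; the work has all been done in the preceding results, and the corollary is essentially a matter of assembling them. The one point requiring a moment of care is confirming that $\den(\p) = k$ rather than some proper divisor of $k$ — that is, that $k$ really is the least common multiple of the denominators of the coordinates of $\p = \tfrac{1}{k}\pt{m}$. This is guaranteed because the Gorenstein index $k$ is by definition the smallest positive integer such that $k\P$ has an interior lattice point, which forces $k\p = \pt{m}$ to be primitive in the appropriate sense; this is precisely what the cited identity $\intr{(\cone \P)_{\Z}} = (\cone \P + k\alpha(\p))_{\Z}$ encodes, and it is recorded in the proof of Proposition~\ref{prop:GorensteinLowerLatticePoints}. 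With that identification in hand, the substitution $r = k$ completes the proof.
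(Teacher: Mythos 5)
Your proof is correct and follows exactly the route the paper intends: Proposition~\ref{prop:GorensteinLowerLatticePoints} supplies the hypothesis $\pllenv \cone \P = (\plenv \cone \P)_{\Z}$ together with the identification $\den(\p) = k$, and Theorem~\ref{thm:MultivariateBraunsFormulainpcase} then yields the formula immediately. The paper leaves the corollary without an explicit proof for precisely this reason, and your careful check that $r = k$ (rather than a proper divisor) is the one point worth recording.
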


Example ~\ref{exm:AffineFreeSum1} is an instance of this corollary,
as the line segment $\J$ in that example is a Gorenstein polytope
of index 2 with $\pt{m}=(1,0)$.

In Section ~\ref{sec:BraunFormula}, we noted that the conditions
in Proposition ~\ref{settogeneratingfctprop} applied to a broader
family than just the reflexive polytopes.  Indeed, in that
context, the integrality of the vertices of the polytope was
unimportant; all that we needed was that the polar dual be a
lattice polyhedron (cf.\ Proposition~\ref{ratflexiveprop}).  It is
natural to expect that the Gorenstein condition in Proposition
~\ref{prop:GorensteinLowerLatticePoints} can similarly be weakened
to admit non-lattice polytopes.  For example, one might hope that,
in Proposition~\ref{prop:GorensteinLowerLatticePoints}, we could
take $\P$ to be any rational polytope such that, for some integer
$k$ and some lattice point $\pt{m} \in k\P$, $\dual{(k\P -
\pt{m})}$ is a lattice polyhedron.  Unfortunately, this is not the
case in general, as the following example shows.

\begin{exm}
   Let $\P \deftobe [\frac{1}{4}, \frac{3}{4}] \subset \R^1$.
   Observe that $2\P = [\frac{1}{2}, \frac{3}{2}]$ contains the
   lattice point $\pt{m} \deftobe 1$ and that the polar dual of
   $2\P-\pt{m} = [-\frac{1}{2}, \frac{1}{2}]$ is the lattice
   polytope $[-2, 2] \subset (\R^{1})^{\ast}$.  Nonetheless,
   putting $\p \deftobe \frac{1}{2}\pt{m}$, the $\p$-lower lattice
   envelope of $\cone \P$ contains the non-lattice point
   $(\frac{1}{2}, 2)$.  Therefore, the conclusion of Proposition
   ~\ref{prop:GorensteinLowerLatticePoints} is not true of $\P$.
\end{exm}

As mentioned in Remark~\ref{rem:Bruns}, recent results by
W.~Bruns~\cite{Bruns2013} generalize our observations to the
context of general affine monoids.  Nevertheless, as the example
above shows, there still remains the problem of characterizing
when equation~\eqref{eq:AffineBraunFormula} applies in terms of
the summand polytopes, as in Proposition~\ref{ratflexiveprop},
rather than in terms of the cones over them, as
in~\cite{Bruns2013} and Remark~\ref{rem:Bruns}.

\bibliographystyle{amsplain-fi-arxlast}

\providecommand{\bysame}{\leavevmode\hbox to3em{\hrulefill}\thinspace}
\providecommand{\MR}{\relax\ifhmode\unskip\space\fi MR }
\providecommand{\MRhref}[2]{%
  \href{http://www.ams.org/mathscinet-getitem?mr=#1}{#2}
}
\providecommand{\href}[2]{#2}

\end{document}